\theoremstyle{definition}
\newtheorem{definition}{Definition}
\newtheorem*{definition*}{Definition}
\newtheorem{lemma}{Lemma}
\newtheorem*{lemma*}{Lemma}
\newtheorem{fact}{Fact}
\newtheorem*{fact*}{Fact}
\newtheorem{theorem}{Theorem}
\newtheorem*{theorem*}{Theorem}
\newtheorem{corollary}{Corollary}
\newtheorem*{corollary*}{Corollary}
\newtheorem*{property*}{Property}
\newtheorem{remark}{Remark}
\newtheorem*{remark*}{Remark}
\newtheorem*{proposition*}{Proposition}
\newcommand{\bb}{\mathbb}
\newcommand{\mc}{\mathcal}
\newcommand{\Var}{\text{Var}}
\newcommand{\ra}{\rightarrow}
\newcommand{\inte}{\text{int}}
\title{A Central Limit Theorem for First Passage Percolation in the Slab}
\author{Serena Sian Yuan}
\date{}
\begin{document}

\maketitle

\begin{abstract}
We consider first-passage percolation on the edges of $\mathbb{Z}^2 \times \{1, \cdots, k\},$ namely the slab $\mathbb{S}_k$ of width $k$. Each edge is assigned independently a passage time of either 0 (with probability $p_c(\mathbb{S}_k)$) or 1 ( with probability $1-p_c(\mathbb{S}_k)$) where $p_c$ is the critical probability for Bernouilli percolation. We prove central limit theorems for point-to-point and point-to-line passage times. These generalize the results of Kesten and Zhang \cite{kestenzhang} to non-planar graphs.
\end{abstract}

\section{Introduction}

\; \; Percolation is a fundamental discrete model in random spatial processes and statistical physics that manifest phase transitions. 
Bond percolation is the usual model on $\mathbb{Z}^d$, which has a phase transition at the 
critical point $p_c \in (0, 1)$, the connected components are almost surely finite for 
$p < p_c$ and for $p > p_c$ there is a unique infinite component.

Given $G= (V(G), E(G))$ with edge weights $(t_e)_{e \in G}$ and a path $\pi \subset E(G)$, define the passage time 
$$
T(\pi) = \sum_{e \in \pi} t_e.
$$
Throughout the paper, we take $t_e = 0$ or $t_e=1$.

For a path $\pi \subset G = \bb{Z}^d$ from $x$ to $y$, the first passage time between two vertices is given by the following,

 $$
 T(x, y) = \inf \{ \sum T(\pi) : \pi \text{ is a path from } x \text{ to } y \}.
 $$ 

The passage time between two vertex sets $A, B \subset V(G)$ is defined as

\begin{equation}
T(A,B) = \inf \{ T(\pi) : \pi \text{ is a path connecting some vertex of A with some vertex of B } \}.
\end{equation}

Let the variable $a(0,n) = T(0,n \mathbf{e_1})$ be called the \emph{point to point passage time} where $\mathbf{e_1}$ is the first coordinate vector. 

Let the variable $b(0,n) = T(0, H_n )$ be the \emph{point to line passage time} where $$H_n : = \{ x \in G : x \cdot e_1 \geq n \}.$$  

 A generalization of $a(0,n)$ is given by $T(0,n \mathbf{u})$, the passage time from the origin $\{ 0 \}$ to the nearest point on the graph $G$ to $n \mathbf{u}$ for given unit vector $\mathbf{u}$.

We work with first passage percolation with $\{ t(e) : e \in E(G)\}$ i.i.d. Bernoulli random variables, with the probability given by,

\begin{equation}
\label{eqn:pre1}
\bb{P}( t(e)=1) = F(1) 
\end{equation}

and 

\begin{equation}
\label{eqn:pre2}
\bb{P}( t(e)=0) = F(0)
\end{equation}

We let $t=0$ be equivalent to closed edges, and let $t=1$ be equivalent to open edges. 

Critical first passage percolation occurs when $F(0)= p_c(G)$, the critical probability of bond percolation of $G$. 

The following dichotomy is well-known as one changes the distribution of $\{t_e\}$. For $F(0) <p_c$, $\frac{a_0}{n}$ and $\frac{b_0}{n}$ converge almost surely to a strictly positive constant. For $F(0) > p_c$, the families of random variables $\{ a_{0,n} \}$ and $\{b_{0,n} \}$ are tight (See e.g. [\cite{Kesten86} Theorem 6.1] and \cite{Zhangzhang}). 


When $F(0) = p_c$, in \cite{Kesten86} it is proved for $\mathbb{Z}^2$ that for each unit vector $u$,
$$
C_3 \log n \leq \bb{E} T(0, nu) \leq C_4 \log n.
$$


\subsection{Main Result}
 
Our main result shows a CLT holds on slabs $\bb{S}_k = \bb{Z}^2 \times \{0, 1, \cdots, k\}$. We will prove the theorem for $b_{0,n}$, but the same proof also works for $a_{0,n}$ and for general $T(0,n\mathbf{u})$. 
 
  \begin{theorem}
  \label{thm:1}
We simply denote $n =(n,0,0)$ . Given that $F$ is defined by (\ref{eqn:pre1}) and (\ref{eqn:pre2}) with $F(0) = p_c(\mathbb{S}_k)$, there exist constants $ 0 < C_1 , C_2 < \infty$ such that
\begin{equation}
\label{eqn:1.7}
  C_1 \log n \leq \Var T(0,n) \leq C_2 \log n, n \geq 2.
\end{equation}
  
Moreover,

\begin{equation}
\label{eqn:1.8}
\frac{b_{0,n} - \bb{E} b_{0,n} }{ \sqrt{ \Var \; T(0,n) } } \overset{d}{\rightarrow} N(0,1)
\end{equation}

and for any $u \in S^1$

\begin{equation}
\label{eqn:1.9}
\frac{T(0,n\mathbf{u}) - \bb{E} T(0,n\mathbf{u})}{\sqrt{ \Var \; T(0,n\mathbf{u})}} \overset{d}{\rightarrow} N(0,1)
\end{equation}
  \end{theorem}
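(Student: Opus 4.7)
The plan is to mimic the martingale-difference argument of Kesten and Zhang \cite{kestenzhang}, replacing their planar inputs by slab analogues. Enumerate the edges of $\mathbb{S}_k$ as $e_1, e_2, \dots$, set $\mathcal{F}_i = \sigma(t(e_1), \dots, t(e_i))$, and decompose
$$T(0, H_n) - \mathbb{E}\, T(0, H_n) = \sum_{i \geq 1} \Delta_i, \qquad \Delta_i := \mathbb{E}[T \mid \mathcal{F}_i] - \mathbb{E}[T \mid \mathcal{F}_{i-1}].$$
Because $t_e \in \{0,1\}$ and $T$ is $1$-Lipschitz in each coordinate, $|\Delta_i| \leq 1$ almost surely. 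Once the variance bound (\ref{eqn:1.7}) is in hand, the Lindeberg condition for the martingale CLT is automatic: for any $\epsilon > 0$ and all $n$ with $\sqrt{C_1 \log n} > 1/\epsilon$, the event $\{|\Delta_i| > \epsilon \sqrt{\mathrm{Var}(T)}\}$ is empty. The problem therefore splits into (i) proving the two-sided variance estimate (\ref{eqn:1.7}), and (ii) showing $\mathrm{Var}(T)^{-1} \sum_i \mathbb{E}[\Delta_i^2 \mid \mathcal{F}_{i-1}] \to 1$ in probability; McLeish's martingale CLT then yields (\ref{eqn:1.8}), and the same argument transfers verbatim to $a_{0,n}$ and to $T(0, n\mathbf{u})$.

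For the upper bound in (i), I would use $\mathrm{Var}(T) = \sum_i \mathbb{E}[\Delta_i^2] \leq \sum_i \mathbb{P}(e_i \text{ is pivotal for } T)$ and control the expected number of pivotals by $O(\log n)$ via RSW/box-crossing estimates and arm-exponent bounds for critical Bernoulli percolation on $\mathbb{S}_k$, which are known to hold. For the lower bound, I would partition $[0, n]$ into dyadic scales $2^j$ for $j = 1, \dots, \log_2 n$, and at each scale exhibit a local modification supported in a box of radius $\sim 2^j$ that forces $|T - T'| \geq 1$ with probability bounded below by a scale-independent constant. Summing these scale-wise contributions to $\sum_i \mathbb{E}[\Delta_i^2]$ yields $\mathrm{Var}(T) \geq C_1 \log n$.

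For (ii), I would partition the edges into those lying in a "geodesic tube" around the $\mathbf{e_1}$-axis and those outside. By an FKG/BK-type estimate the outside contribution to $\sum_i \mathbb{E}[\Delta_i^2 \mid \mathcal{F}_{i-1}]$ is close to its unconditional expectation with high probability, while the inside contribution concentrates around its mean by a standard second-moment calculation, exactly in parallel with \cite{kestenzhang}. The main obstacle is the variance lower bound: Kesten--Zhang's proof invokes planar duality---a closed dual circuit separating the origin from $H_n$---to extract a pivotal edge whose flip changes $T$. The slab admits no such dual, so these circuits must be replaced by direct open-interface constructions within $\mathbb{S}_k$, and one must certify their occurrence at every dyadic scale independently. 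This is where slab-RSW and uniform arm exponents bear the weight of the argument.
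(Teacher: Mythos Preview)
Your decomposition is not the one Kesten--Zhang use, and this difference is the source of a real gap. Kesten--Zhang (and the present paper) do \emph{not} enumerate edges; they filter by dyadic annular scales. One lets $\mathcal{C}_p$ be the innermost $p_c$-open circuit in the annulus $A(p)=S(2^{p+1})\setminus S(2^p)$ (on the slab one works with $\overline{\mathcal{C}_p}=\mathcal{C}_p\times\{0,\dots,k\}$), sets $\mathcal{F}_p=\sigma\bigl(\overline{\mathcal{C}_p},\{t_e:e\in\mathrm{int}(\overline{\mathcal{C}_p})\}\bigr)$, and writes $T(0,\mathcal{C}_q)-\mathbb{E}T(0,\mathcal{C}_q)=\sum_{p\le q}\Delta_p$ with $q\sim\log_2 n$. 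There are thus only $O(\log n)$ increments, each with a stretched-exponential tail, and after truncating to $\{m(p)\le p+C\log q\}$ the increment $\tilde\Delta_p$ depends only on edges in the shell $S(2^{p+C\log q+1})\setminus S(2^{p-1})$. It is precisely this finite-range property that makes $\tilde\Delta_p^2$ and $\tilde\Delta_r^2$ independent once $|p-r|\gtrsim\log q$, whence a second-moment bound gives $q^{-1}\sum_p(\tilde\Delta_p^2-\mathbb{E}\tilde\Delta_p^2)\to 0$ in probability, i.e.\ McLeish's third condition.

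Your edge-by-edge array has no such spatial decoupling. The nonzero $\Delta_i$ cluster along the random geodesic, edges at the same scale are heavily correlated, and your ``geodesic tube plus FKG/BK'' sketch supplies no concentration mechanism for $\sum_i\Delta_i^2$; the appeal to ``exactly in parallel with \cite{kestenzhang}'' rests on a misreading of that paper, which never uses an edge filtration. Your variance lower bound is likewise incomplete: exhibiting at each dyadic scale a local modification that changes $T$ with uniform probability does not by itself yield $\sum_i\mathbb{E}\Delta_i^2\ge c\log n$, because in your filtration there is no orthogonality across scales and the per-edge increments do not coincide with the per-scale effects. The annular filtration solves both issues at once: orthogonality of the $\Delta_p$ turns the lower bound into $\mathbb{E}\Delta_p^2\ge c$ (obtained via slab RSW by forcing many disjoint closed blocking surfaces in $A(p)$), and the finite-range structure after truncation delivers the law of large numbers for $\sum_p\Delta_p^2$.
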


We prove Theorem \ref{thm:1} by representing $b_{0,n} - \bb{E} b_{0,n}$ as a sum of martingale differences, and this would allow us to apply a central limit theorem for martingales, by McLeish \cite{McLeish}.

We give a brief summary of the ideas in the proof for our result.

\subsection{Outline of Proof.}


Two main ingredients for our proof are an adaptation of the martingale central limit theorem in \cite{McLeish} and the Russo Seymour Welsh Theorem for Bernouilli percolation on slabs \cite{wu}. The analogous result on $\mathbb{Z}^2$ was proved by Kesten and Zhang \cite{kestenzhang} by adapting the martingale central limit theorem and the RSW Theorem on $\mathbb{Z}^2$, and considering the passage time within annuli $A_{n, 2n}$. We now summarize the argument of Kesten and Zhang for $\mathbb{Z}^2$.

Let $S(n)$ denote the ball of a given radius $n$. Let $A(r_1,r_2)$ denote the annulus with inner radius $r_1$ and outer radius $r_2$. By the Russo-Seymour-Welsh Theorem, with probability uniformly bounded from below, there is a $p_c$-closed circuit in $A(2^p, 2^{p+1})$. If it exists, let $\mathcal{C}_p$ denote the innermost (with respect to lexicographical ordering) circuit in $A_{2^p, 2^{p+1}}.$


Note any two vertices $v', v''$ on $\mc{C}_p$ are connected by a path that is part of $\mc{C}_p$ and have passage time equal to zero (since $\mc{C}_p$ is closed, and summing edges where $t_e=0$ gives no contribution to the passage time). Therefore, for all vertices $v \in \mc{C}_p$, the values of $T(\mathbf{0},v)$ are the same.

This fact implies that we may sum the passage times as,
\begin{equation}
\label{eqn:7}
T(\mathbf{0}, \mc{C}_q) = \sum_{p=0}^q T(\mc{C}_{p-1}, \mc{C}_p).
\end{equation}

Since $b_{0,n}$ is well-approximated by $T(0,\mc{C}_q)$ for $q$ that satisfies $2^{q-1} <n < 2^q$, it suffices to prove a CLT for (\ref{eqn:7}).

Given a circuit $C$ surrounding the origin and lying outside of $S(2^p)$, the event 

$$
\{ \mc{C}_p =C \}
$$

only depends on 
\begin{align*}
t(e) \text{ s.t. } e \in C \cup \text{int}(C) \setminus S(2^p), \text{ where int}(C)\text{ is the interior of } C.
\end{align*}

We may clearly see that random variables $\{ T(\mc{C}_{p-1}, \mc{C}_p), \mc{C}_p \}_{p \geq 0}$ form a Markov chain, since it is possible to determine $\mc{C}_p$ once $\mc{C}_{p-1}$ is given, even without knowledge of values $t(e)$ for any edges $e \in \text{int}(\mc{C}_{p-1})$. 

Therefore, the proof for the CLT relies on the sum of martingale differences representation of $b_{0,n} - \bb{E} b_{0,n}$.

Define

\begin{align}
\label{eqn:1.21}
\mc{F}_p = \sigma-\text{field generated by } \mathcal{C}_p \text{ and } \{ t_e | e  \in \inte( \mc{C}_p) \} 
\end{align}

We therefore have,
\begin{equation}
\label{eqn:b0n}
b_{0,n} - \bb{E} b_{0,n} = \sum_{p=0}^q (\bb{E} [b_{0,n} | \mc{F}_p ] - \bb{E} [b_{0,n} | \mc{F}_{p-1}] ) .
\end{equation}

Then $\mc{G}_p := \bb{E} [b_{0,n} | \mc{F}_p] - \bb{E} [b_{0,n} | \mc{F}_{p-1} ] $ are martingale differences and are related to $T(\mc{C}_{p-1}, \mc{C}_p)$. The truncated versions of $\mc{G}_{p_1}$ and $\mc{G}_{p_2}$ are nearly independent for $| p_1 - p_2|$ large. This allows us to apply a central limit theorem for martingales (\cite{McLeish}) to obtain (\ref{eqn:1.8}).

\begin{figure}
\label{fig:widthkcircuit}
\centering
\includegraphics[width=3cm]{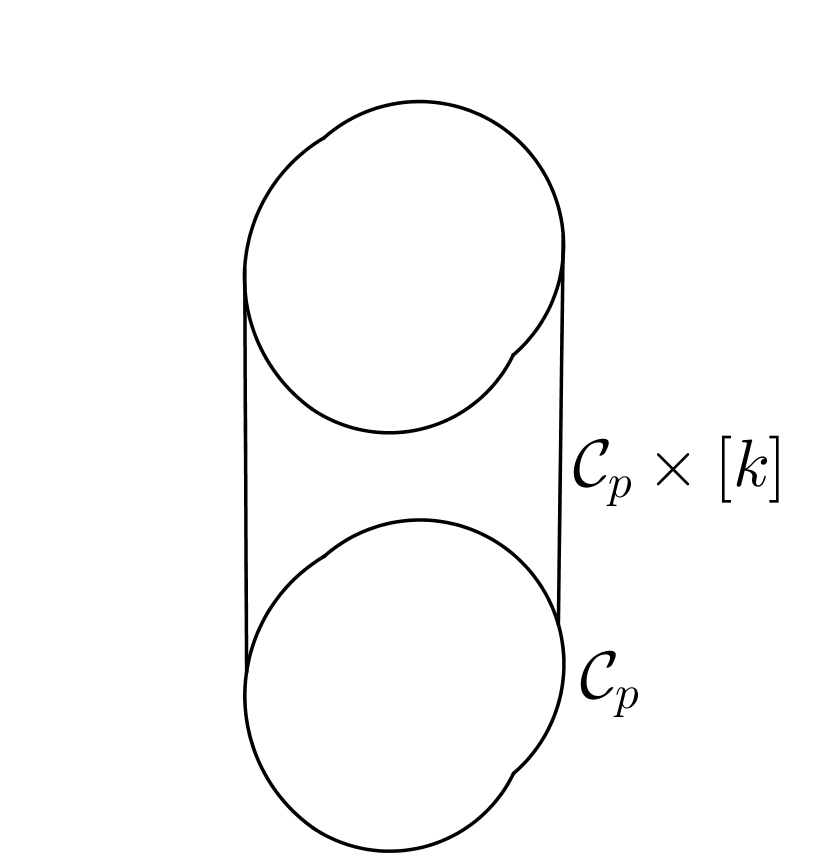}
\caption{$\overline{\mc{C}_p}$}
\end{figure}

We now discuss necessary modifications to prove Theorem \ref{thm:1} for $\mathbb{S}_k$. Given $A \subset \mathbb{Z}^2$, denote by
\begin{align*}
\overline{A} = A \times \{ 0, \cdots, k\}. 
\end{align*}

Applying the RSW Theorem proved in \cite{wu}, we see that with probability uniformly bounded from below, there is a $p_c$-closed circuit in $\overline{A}(2^p, 2^{p+1})$. Still denote by $\mc{C}_p$ the innermost such circuit (given a fixed ordering of edges).

However, the equality (\ref{eqn:7}) would fail since the geodesic from $0$ to $\mc{C}_q$ may not intersect the $\{ \mc{C}_p \}_{p <q}$ (they will intersect $\{ \overline{\mc{C}}_p\}_{p < q}$ though). To deal with this, we modify the Def. (\ref{eqn:1.21}) as 
\begin{align*}
\mc{F}_p = \sigma \text{-field generated by } \overline{\mc{C}_p} \text{ and } \{t_e : e \in \text{int}(\overline{\mc{C}_p}) \}.
\end{align*}

Then we can still write $b_{0,n} - \bb{E} b_{0,n}$ as the martingale difference sum $b_{0,n} - \bb{E} b_{0,n} = \sum_{p \leq q} \triangle_p$, where $\triangle_p = \mathbb{E}[b_{0,n} | \mc{F}_p] - \mathbb{E}[b_{0,n} | \mc{F}_{p-1} ]$. 

The increments $( \triangle_p )_{p \leq q}$ will be different from the $(\mc{C}_p)_{p \leq q}$, but only by a finite number (in fact it is bounded by $\mc{C}_k$, where $k$ is the width of the slab). Therefore it still satisfies the condition of McLeish's CLT \cite{McLeish} and an application of that CLT concludes Theorem \ref{thm:1}. 

\section{Preliminary Results}

In this section we recall the RSW Theorem for critical Bernouilli percolation on $\mathbb{S}_k$,  proven in [\cite{wu} Theorem 3.1].

Let $\bb{P}_p$ be the product measure on the configuration space $\{0,1\}^E = \Omega$, such that $\bb{P}_p(e=1)=p$. 
 
For $x < x'$, denote $[x, x'] = \{ x, x+1, \cdots, x' \}$. Let $R = \overline{[x, x'] \times [y, y']}$ be a rectangle in $\mathbb{S}_k$ that is equivalent to $[x, x'] \times [y, y'] \times [k]$. Say $R$ is crossed horizontally (denoted as $\mc{H}(R)$) if there is an open path from $\overline{\{x\} \times [y, y']}$ to $\overline{\{x'\} \times [y,y']}$ inside $R$. 

For $m,n \geq 1$, and for $p \in [0,1]$, let us define

$$
f(m, n) = f_p (m,n) := \bb{P}_p [ \mc{H} (\overline{[0,m] \times [0,n]})].
$$

\begin{theorem}{(Box-crossing property)}
\label{thm:BCP}
Let $p= p_c(\mathbb{S}_k).$ For $\rho>0$, there exists a constant $c_{\rho} \in (0, 1)$, independent of $n$, such that for every $n \geq 1/ \rho$,
\begin{equation}
c_{\rho} \leq f(n, \lfloor \rho n \rfloor) \leq 1 - c_{\rho}.
\end{equation}
\end{theorem}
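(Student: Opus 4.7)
The plan is to adapt the classical Russo--Seymour--Welsh machinery to the slab, following the approach of \cite{wu}. The proof breaks into three ingredients: a seed estimate for crossings of squares, a gluing step that upgrades these to rectangles of arbitrary aspect ratio, and the use of the planar symmetries of $\mathbb{S}_k$ to trade horizontal crossings for vertical ones.

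\emph{Step 1 (Seed estimate).} I would first show that $f_{p_c}(n,n) \in (c, 1-c)$ uniformly in $n$, for some $c>0$. The lower bound follows from sharpness of the phase transition: if square crossing probabilities tended to $0$ along a subsequence, then a Menshikov / Aizenman--Barsky / Duminil-Copin--Tassion differential inequality (via OSSS or Russo's formula, applied to the FKG-monotone increasing event $\mathcal{H}(\overline{[0,n]^2})$) would force exponential decay of two-point functions at $p_c$, contradicting the definition of the critical point. The upper bound is the genuinely harder half, since $\mathbb{S}_k$ is not self-dual. If instead $f_{p_c}(n,n) \to 1$ along a subsequence, one could use FKG gluing at $p = p_c - \delta$ together with a Bollob\'as--Riordan-type finite-size criterion to construct an infinite open cluster strictly below $p_c$, again a contradiction.

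\emph{Step 2 (Gluing to general aspect ratios).} With the seed estimate in hand, I would apply Tassion-style RSW gluing: horizontal crossings of two overlapping $n \times n$ squares can be concatenated into a horizontal crossing of a longer rectangle, provided one conditions on a favorable vertical crossing in the overlap region. The FKG inequality on the i.i.d.\ bond environment on $\mathbb{S}_k$ lets one multiply the relevant probabilities without losing more than a constant factor. Iteration yields
\[
f_{p_c}(\lfloor \rho n \rfloor, n) \geq c_\rho, \qquad \rho \geq 1,
\]
and the $90^\circ$ rotation symmetry of the planar base of $\mathbb{S}_k$ reduces the case $\rho < 1$ to the case $\rho \geq 1$ after swapping the two horizontal coordinates.

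\emph{Step 3 (Matching upper bound).} The upper bound $f(n, \lfloor \rho n \rfloor) \leq 1 - c_\rho$ follows from the same scheme applied in the orthogonal direction: if $f$ were arbitrarily close to $1$ for some fixed $\rho$, then the Step 1 sharp-threshold argument, now applied to long rectangles rather than squares, would produce an infinite cluster at some $p < p_c$, contradicting the definition of $p_c(\mathbb{S}_k)$. Combining the two directions with FKG gives the stated bound.

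The main obstacle is the upper half of the seed estimate in Step 1. On $\mathbb{Z}^2$ this is free from planar self-duality, but on $\mathbb{S}_k$ duality is unavailable, and one must replace it with a non-dual input based on sharpness of the phase transition combined with a finite-size criterion; this is precisely the contribution of \cite{wu} relative to the classical $\mathbb{Z}^2$ argument, and it is the reason the result is invoked here as a black box.
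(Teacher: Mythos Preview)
The paper does not prove this theorem. Theorem~\ref{thm:BCP} is stated in Section~2 as a preliminary result quoted from \cite{wu} (Theorem~3.1 there), and the paper offers no argument beyond the citation; it is used purely as a black box in the remainder of the proof of Theorem~\ref{thm:1}. So there is no ``paper's own proof'' to compare your proposal against.

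Your sketch is a reasonable high-level outline of the argument in \cite{wu}: a seed estimate for square crossings coming from sharpness of the phase transition (which substitutes for the missing planar duality), Tassion-style gluing via FKG to arbitrary aspect ratios, and a matching upper bound from a finite-size criterion. You yourself note in the final paragraph that the hard part---the upper half of the seed estimate without self-duality---is exactly the contribution of \cite{wu} and ``is the reason the result is invoked here as a black box.'' That is correct, and it is also the reason the present paper does not attempt to reproduce the proof. For the purposes of this paper nothing more than the citation is needed.
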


\begin{remark}
The constant $c_{\rho}$ depends on the thickness $k$ of the slab. 
\end{remark}

The following are some consequences of the box-crossing property on the slab as given in [\cite{wu} Corollary 3.2].

\begin{corollary}
\label{cor:1}

For critical percolation on $\mathbb{S}_k$, we have
\begin{enumerate}
\item \label{item:1} ( Existence with positive probability of circuits in the annulus $\Lambda_{2n} \setminus \Lambda_n$.) 

There exists $c>0$ such that for every $n \geq 1$,
\begin{equation}
\mathbb{P}_p [\text{there exists an open circuit in } \overline{A_{n, 2n}} \text{ surrounding } \bar{B_n}] \geq c. 
\end{equation}

\item \label{item:2} (Existence of blocking surfaces with positive probability.) 

There exists $c>0$ such that for every $n \geq 1,$
\begin{equation}
\mathbb{P}_p [\text{there exists an open path from } \bar{B_n} \text{ to } \partial \bar{B_{2n}}] \leq 1 - c. 
\end{equation}



\end{enumerate}

\end{corollary}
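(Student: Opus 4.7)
Both parts follow the standard Russo--Seymour--Welsh gluing paradigm adapted to the slab, combining open crossings of slab-rectangles from Theorem \ref{thm:BCP} through the FKG inequality.

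For (\ref{item:1}) the plan is to choose four overlapping slab-rectangles whose union forms a frame inside $\overline{A_{n,2n}}$ encircling $\overline{B_n}$, for instance
\begin{align*}
\overline{R_1} &= \overline{[-2n,2n] \times [n,2n]}, & \overline{R_2} &= \overline{[n,2n] \times [-2n,2n]}, \\
\overline{R_3} &= \overline{[-2n,2n] \times [-2n,-n]}, & \overline{R_4} &= \overline{[-2n,-n] \times [-2n,2n]}.
\end{align*}
Each has planar aspect ratio $\rho = 4$ independent of $n$, so Theorem \ref{thm:BCP} gives an open crossing in the long direction with probability at least $c_\rho > 0$. These four events are increasing, so the FKG inequality yields a lower bound of $c_\rho^4 > 0$ for their simultaneous occurrence. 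It then remains to glue the four long paths into a closed loop surrounding $\overline{B_n}$: within each corner slab-box the projections to $\mathbb{Z}^2$ of the two entering paths must intersect by planarity of $\mathbb{Z}^2$, and I would invoke Theorem \ref{thm:BCP} once more inside the corner to produce a connecting open sub-path at a common height with further positive probability, combining everything through FKG.

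For (\ref{item:2}) I would run the symmetric argument on complementary events. Choose four slab-rectangles whose union radially covers $\overline{A_{n,2n}}$, so that any open path from $\overline{B_n}$ to $\partial \overline{B_{2n}}$ must cross at least one of them in the radial direction. The upper bound in Theorem \ref{thm:BCP} says each such radial crossing is absent with probability at least $c_\rho$. Non-existence of an open crossing is a decreasing event, and decreasing events are also positively correlated under FKG, so the probability that all four rectangles are simultaneously radially blocked is at least $c_\rho^4$. On this event no open path can connect $\overline{B_n}$ to $\partial \overline{B_{2n}}$, giving the claimed bound with $c = c_\rho^4$.

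The main obstacle is the three-dimensional corner gluing in (\ref{item:1}): two long crossings of overlapping slab-rectangles need not meet in $\mathbb{S}_k$, since paths may pass each other at different $z$-heights, so the planar RSW corner argument does not transfer verbatim. I plan to resolve this by taking each corner to have planar dimensions comparable to $n$ and applying Theorem \ref{thm:BCP} inside the corner slab-box to extract a connecting open sub-path; the full construction then remains a positive conjunction of finitely many box-crossing events, and FKG preserves a lower bound on the joint probability that is uniform in $n$.
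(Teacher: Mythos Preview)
The paper does not prove Corollary~\ref{cor:1} at all; it simply quotes it from \cite{wu}. So there is no in-paper argument to compare against, and the question reduces to whether your sketch stands on its own.

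Your argument for (\ref{item:2}) is sound. Any open path from $\overline{B_n}$ to $\partial\overline{B_{2n}}$, truncated at its first exit from $\overline{B_{2n}}$, must contain a short (radial) crossing of one of the four slab-rectangles; hence if all four short-crossing events fail there is no such path. The four failure events are decreasing, the upper bound in Theorem~\ref{thm:BCP} gives each a probability at least $c_\rho$, and FKG for decreasing events gives the joint lower bound $c_\rho^4$.

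Your argument for (\ref{item:1}) has a genuine gap, and it is exactly the one you flag but do not close. In the slab two long crossings whose $\mathbb{Z}^2$-projections meet in a corner need not touch, and invoking Theorem~\ref{thm:BCP} once more inside the corner does not repair this: the additional box-crossing you obtain is just a \emph{third} open path in the corner, with no control over its $z$-level and no reason to meet either of the first two. You are then left with three open paths threading the corner at possibly three different heights, and FKG only lets you intersect increasing events---it cannot force disjoint open paths to share a vertex. Nor can a naive finite-energy fix work uniformly in $n$, since the projected intersection point is configuration-dependent and opening all vertical edges in the corners costs probability exponential in $n^2$. This corner-gluing problem is precisely what makes RSW on slabs nontrivial; its resolution in \cite{wu} requires a genuine connection/gluing lemma for slab crossings, not merely the statement of Theorem~\ref{thm:BCP} combined with FKG. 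As written, your plan for (\ref{item:1}) does not go through.
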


\section{Proof of Theorem} 

We consider the dyadic scales,
\begin{equation}
\label{eqn:2.1}
 \{ 2^q \}_{q \in \bb{N}}
\end{equation}

Let $S(n) = [-n, n]^2 \times [k] $ be the square of size $2n$ with width $k$ centered at the origin, and $\partial S(n) = \partial [-n,n]^2 \times [k] $. 

Let the annulus between scales $S(2^p)$ and $S(2^{p+1})$ be defined as 

\begin{equation}
\label{eqn:annulusdefinition}
A(p) = S(2^{p+1}) \setminus S(2^p)
\end{equation}


We define $m(p)$ for $p \geq 0$ as

\begin{equation}
\label{eqn:defm}
m(p) = \inf \{ t \in \{ p, p+1, \cdots,\} : A(t) \text{ contains an open circuit surrounding the origin } \}.
\end{equation}

Properties of $m(p)$ include:
\begin{remark}
$m(p) \geq p$ but it is possible for $m(p) = m(p') \geq p' >p$, which occurs when there is no dual closed surface surrounding the origin in any of the annuli $A(p), A(p+1), \cdots, A(p'-1)$.
\end{remark}

Define the innermost open circuit as
\begin{equation}
\label{eqn:2.6}
\text{For } p \geq 0,
\mc{C}_p = \{ \text{ innermost $p_c$-open circuit that surrounds the origin } \textbf{0} \text{ within } A(m(p)) \}.
\end{equation}

We have the following fact.

\begin{fact}
\label{fact:2.7}
By definition, $p_1 \leq p_2$ implies $m(p_1) \leq m(p_2)$. Therefore, either 
\begin{itemize}
\item $m(p_1) = m(p_2)$ and $\mc{C}_{p_1} = \mc{C}_{p_2}$ or,
\item $m(p_1) < m(p_2)$ and $\mc{C}_{p_1} \subset A(m(p_1)) \subset S(m(p_2)) \subset \text{int}(\mc{C}_{p_2})$
\end{itemize}
must hold.
\end{fact}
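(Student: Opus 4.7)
The plan is to unwind the definitions of $m(p)$ and $\mc{C}_p$ in (\ref{eqn:defm}) and (\ref{eqn:2.6}); there is no probabilistic content here, only set-theoretic bookkeeping about the dyadic annuli $A(p) = S(2^{p+1}) \setminus S(2^p)$. First I would establish the monotonicity of $m$: if $p_1 \leq p_2$ then $\{ t \geq p_2\} \subset \{t \geq p_1\}$ while the condition ``$A(t)$ contains an open circuit surrounding $\mathbf{0}$'' is independent of the starting index, so the infimum over the smaller set is at least the infimum over the larger, giving $m(p_1) \leq m(p_2)$.

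Next I would split into the two cases of the dichotomy. If $m(p_1) = m(p_2)$, then by (\ref{eqn:2.6}) both $\mc{C}_{p_1}$ and $\mc{C}_{p_2}$ are the innermost $p_c$-open circuit surrounding $\mathbf{0}$ inside the same annulus $A(m(p_1))$; since the innermost circuit is uniquely specified once the lexicographic ordering on edges is fixed, the two agree. If instead $m(p_1) < m(p_2)$, so that $m(p_1) + 1 \leq m(p_2)$, I would chain together three elementary inclusions: $\mc{C}_{p_1} \subset A(m(p_1))$ directly from (\ref{eqn:2.6}); $A(m(p_1)) \subset S(2^{m(p_1)+1}) \subset S(2^{m(p_2)})$ by monotonicity of the dyadic squares; and $S(2^{m(p_2)}) \subset \inte(\mc{C}_{p_2})$ because $\mc{C}_{p_2}$ surrounds $\mathbf{0}$ and lies in $S(2^{m(p_2)+1}) \setminus S(2^{m(p_2)})$.

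The only step that requires a small convention, and hence the main (mild) obstacle, is the last inclusion, since $\mc{C}_{p_2}$ is a cycle in the three-dimensional slab rather than a planar Jordan curve. I would interpret $\inte(\mc{C})$ as the product with $\{0,\dots,k\}$ of the bounded face of the projection of $\mc{C}$ to $\mathbb{Z}^2$ that contains $\mathbf{0}$, consistent with the $\overline{(\cdot)}$ shorthand used in the outline. With this convention the claim reduces to planar topology: the projected circuit does not meet $[-2^{m(p_2)}, 2^{m(p_2)}]^2$, so the origin together with this inner square lie in the same bounded component of the complement of the projection. With these observations in place the fact follows immediately, and no probabilistic input beyond the existence of $\mc{C}_p$ is used.
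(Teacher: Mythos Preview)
Your proposal is correct and follows exactly the route the paper intends: the paper offers no separate proof of this Fact, treating it as immediate ``by definition,'' and your argument simply unpacks (\ref{eqn:defm}) and (\ref{eqn:2.6}) as required. You even handle two points more carefully than the paper itself---writing $S(2^{m(p_2)})$ rather than the paper's $S(m(p_2))$, and making explicit the convention for $\inte(\mc{C}_{p_2})$ in the slab---so nothing is missing.
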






We introduce

$$
\overline{\mc{C}_p } =\mc{C}_p \times \{0, \cdots, k \}.
$$




Recall from Equation (\ref{eqn:1.21})

$$\mc{F}_p = \sigma-\text{field generated by } \mathcal{C}_p \text{ and } \{ t_e | e  \in \inte( \overline{\mc{C}_p}) \}.$$ 

When considering the martingale difference array, instead of studying 


$$T(0, n) - \bb{E} T(0,n),$$

we study 

\begin{align}
\label{eqn:martingale}
  T(0, \mc{C}_{\ell}) - \bb{E} T(0, \mc{C}_{\ell}) := \sum_{p=1}^{\ell} \triangle_p.
\end{align}

where $\ell$ satisfies $2^{\ell-1} < n <2^{\ell}$ and where 




\begin{align}
\triangle_p ( \omega) = \bb{E} [ T(0, \mc{C}_{\ell}) |\mc{F}_p] - \bb{E} [ T(0,\mc{C}_{\ell}) | \mc{F}_{p-1}].
\end{align}


We now summarize some basic facts used in the proof.

\begin{fact}
\label{fact:1}
Given that there is a minimizing path from one point to the other, any subpath is a minimizer (between its extreme points). 
\end{fact}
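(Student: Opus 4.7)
The plan is to prove Fact \ref{fact:1} by a standard exchange/contradiction argument, using additivity of the passage time over concatenated paths. Let $\pi$ be a minimizing path from $x$ to $y$ with $T(\pi)=T(x,y)$, and let $u,v$ be any two vertices visited by $\pi$ in that order. Write $\pi = \pi_{xu} \cdot \pi_{uv} \cdot \pi_{vy}$, where $\pi_{xu}$, $\pi_{uv}$, $\pi_{vy}$ are the corresponding subpaths. Because the edge weights $t_e$ are nonnegative, additivity gives
\begin{equation*}
T(\pi) = T(\pi_{xu}) + T(\pi_{uv}) + T(\pi_{vy}).
\end{equation*}

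I would then argue by contradiction: suppose $\pi_{uv}$ is not a minimizer between $u$ and $v$, so there exists some path $\pi'$ from $u$ to $v$ with $T(\pi') < T(\pi_{uv})$. Consider the concatenation $\tilde{\pi} = \pi_{xu} \cdot \pi' \cdot \pi_{vy}$, which is a (not necessarily self-avoiding) walk from $x$ to $y$. By additivity,
\begin{equation*}
T(\tilde{\pi}) = T(\pi_{xu}) + T(\pi') + T(\pi_{vy}) < T(\pi_{xu}) + T(\pi_{uv}) + T(\pi_{vy}) = T(\pi) = T(x,y),
\end{equation*}
contradicting the definition of $T(x,y)$ as the infimum of passage times over all paths from $x$ to $y$.

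The only mild subtlety is that $\tilde{\pi}$ may traverse some edges twice and therefore may not be a simple path; one handles this by either (i) working with the infimum over walks (which equals the infimum over paths since $t_e \geq 0$, so loops can only increase passage time and can be erased), or (ii) explicitly loop-erasing $\tilde{\pi}$ to obtain a genuine path from $x$ to $y$ whose passage time is no larger than $T(\tilde{\pi})$. Either way the strict inequality is preserved and the contradiction goes through. No further machinery is required, so I do not anticipate a real obstacle here; the statement is essentially the Bellman optimality principle applied to a path minimizer.
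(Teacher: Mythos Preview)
Your proposal is correct and is essentially the same approach the paper indicates: the paper simply remarks that Fact~\ref{fact:1} ``follows easily from the triangle inequality,'' and your exchange/contradiction argument is precisely the standard way to unpack that statement. No further comment is needed.
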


Fact \ref{fact:1} follows easily from the triangle inequality.

We have the following fact about $\overline{C}_p$,

\begin{fact}
\label{fact:3}
$$|T(0,x) - T(0, \mc{C}_p) | \leq k, \; \forall x \in \overline{\mc{C}_p}.$$
\end{fact}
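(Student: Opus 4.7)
The plan is to use the two defining features of $\overline{\mc{C}_p}$: that $\mc{C}_p$ is a $p_c$-open circuit (so travelling along it costs nothing), and that $\overline{\mc{C}_p}$ extends $\mc{C}_p$ only in the vertical slab direction, which has graph-diameter at most $k$. These two facts should combine via the triangle inequality to give the bound.

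First I would establish that $T(0, \mc{C}_p)$ is unambiguously the common value of $T(0, v)$ as $v$ ranges over $\mc{C}_p$. Since every edge of the circuit $\mc{C}_p$ has $t_e = 0$, any sub-arc of $\mc{C}_p$ joining two of its vertices has passage time zero, so $T(v, v') = 0$ and hence $T(0, v) = T(0, v')$ by the triangle inequality for $T$. Write $\tau$ for this common value.

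Next, given $x \in \overline{\mc{C}_p}$, I would produce a vertex $z \in \mc{C}_p$ lying in the same vertical column as $x$, i.e.\ with the same $\bb{Z}^2$-projection. This is possible because, by definition, $\overline{\mc{C}_p}$ is the vertical lift of $\mc{C}_p$ to all heights $\{0, \dots, k\}$. The vertical graph-path from $x$ to $z$ uses at most $k$ edges, so $T(x,z) \leq k$, and the triangle inequality yields $|T(0,x) - T(0,z)| \leq T(x,z) \leq k$. Since $T(0,z) = \tau = T(0, \mc{C}_p)$, the claimed bound $|T(0,x) - T(0,\mc{C}_p)| \leq k$ follows.

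The only thing worth being careful about is the set-theoretic reading of $\overline{\mc{C}_p}$ and the guarantee that every $x \in \overline{\mc{C}_p}$ sits directly above or below some vertex of $\mc{C}_p$ in its column. Once that bookkeeping is pinned down---essentially immediate from $\overline{\mc{C}_p} = \mc{C}_p \times \{0, \dots, k\}$ read at the level of horizontal projections---there is no real technical obstacle, and the whole argument is a one-line application of the triangle inequality combined with the openness of $\mc{C}_p$.
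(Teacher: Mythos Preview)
Your argument is correct and matches the paper's own reasoning: the paper does not give a formal proof of Fact~\ref{fact:3}, relying instead on a figure whose caption (``$d_1$ and $d_2$ are bounded by $k$'') encodes exactly your vertical-column observation, and the same idea (``move to $\mc{C}_p$ with a cost of at most $k$, move freely on the circuit $\mc{C}_p$'') is spelled out inside the proof of Lemma~\ref{lem:1}. Your write-up is simply a clean, explicit version of that implicit argument.
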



\begin{figure}
\centering
\label{fig:proj}
\includegraphics[width=5.75cm]{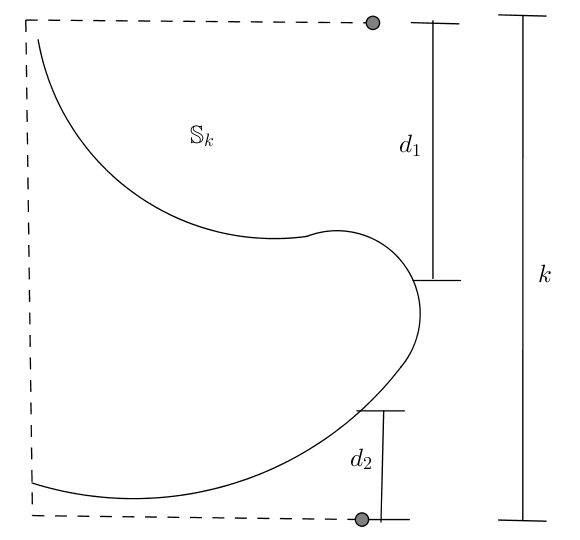}
\caption{$d_1$ and $d_2$ are bounded by $k$}
\end{figure}

Let $\omega$ and $\omega'$ be independent samples from $\Omega$. In the proofs we will fix a scale $p \in \bb{N}$ and use $\omega'(S(2^p)^c)$ to denote the edge configuration of $\omega'$ outside $S(2^p)$ and use $\omega(S(2^p))$ to denote the configuration of $\omega$ inside of $S(2^p)$. Also let $\bb{E}$ and $\bb{E}'$ be the expectation with respect to $\omega$ and $\omega'$. This allows us to study how the martingale difference $\triangle_p$ depends on the configurations $\omega$ inside $\mc{C}_p$.

Let $\tau_{\overline{\mathcal{C}_p(\omega)}}(\omega)$ be the first intersection point of the geodesic from $\mathbf{0}$ to $\mathcal{C}_{\ell}$ at $\overline{\mathcal{C}_p}$.

More precisely, we take an arbitrary ordering on all edges, and then order all the paths in lexicographical order. If there are more than one geodesic from $\mathbf{0}$ to $C_{\ell}$, choose the smallest path with respect to the ordering. Let $\{ \mc{U}_1, \mc{U}_2 , \cdots \} $ be this smallest path (it has to be self-avoiding). We let $\tau_{\overline{\mathcal{C}_p(\omega)}}(\omega) = \mc{U}_m$ such that $m = \inf \{ \ell: \mc{U}_{\ell} \in \overline{\mc{C}_p} \}.$ 

\begin{lemma}
\label{lem:1}


\begin{align}
\label{eqn:lem1final}
	\triangle_p = T(0, \mc{C}_p) - T(0, \mc{C}_{p-1})  + \bb{E}' [T(\tau_{\overline{\mc{C}_p}}, \mc{C}_{\ell})] - \bb{E}' [T(\tau_{\overline{\mc{C}_{p-1}}}, \mc{C}_{\ell})] + R
	\end{align}

    where $|R| \leq 8k$ with probability $1$. 
    
\end{lemma}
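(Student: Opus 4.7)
The plan is to split the geodesic from $0$ to $\mc{C}_\ell$ at its first crossing of $\overline{\mc{C}_p}$ (and, separately, of $\overline{\mc{C}_{p-1}}$), take conditional expectations with respect to $\mc{F}_p$ and $\mc{F}_{p-1}$, and absorb every $O(k)$ discrepancy produced by Fact \ref{fact:3} into the remainder $R$. The pathwise cut is immediate: since $\tau_{\overline{\mc{C}_p}}$ lies on the geodesic from $0$ to $\mc{C}_\ell$, Fact \ref{fact:1} gives
\begin{equation*}
T(0,\mc{C}_\ell) = T(0,\tau_{\overline{\mc{C}_p}}) + T(\tau_{\overline{\mc{C}_p}},\mc{C}_\ell),
\end{equation*}
and Fact \ref{fact:3}, applied at $\tau_{\overline{\mc{C}_p}}\in \overline{\mc{C}_p}$, replaces $T(0,\tau_{\overline{\mc{C}_p}})$ by $T(0,\mc{C}_p)$ at cost at most $k$; one obtains $T(0,\mc{C}_\ell) = T(0,\mc{C}_p) + T(\tau_{\overline{\mc{C}_p}},\mc{C}_\ell) + r_p$ with $|r_p|\le k$.

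I then take $\bb{E}[\,\cdot\,|\mc{F}_p]$ of this identity. The quantity $T(0,\mc{C}_p)$ is only \emph{approximately} $\mc{F}_p$-measurable, since a minimising path from $0$ to $\mc{C}_p$ could leave $\overline{\mc{C}_p}$; however, the passage time computed with edges restricted to $\inte(\overline{\mc{C}_p})\cup\overline{\mc{C}_p}$ is genuinely $\mc{F}_p$-measurable and, by a second application of Fact \ref{fact:3}, differs from $T(0,\mc{C}_p)$ by at most $k$ (truncate the minimiser at its first hit of $\overline{\mc{C}_p}$ and use the free circuit $\mc{C}_p$ plus at most $k$ vertical steps). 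For the outside piece I invoke the Markov property of the cylinder: the condition ``$\mc{C}_p$ is the innermost open circuit in $A(m(p))$'' is an event about edges of $\inte(\overline{\mc{C}_p})$ alone (it records the absence of any earlier open circuit surrounding $0$ in $A(p),\ldots,A(m(p)-1)$), so conditional on $\mc{F}_p$ the edges outside $\overline{\mc{C}_p}$ retain their unconditional product distribution and one may write $\bb{E}[T(\tau_{\overline{\mc{C}_p}},\mc{C}_\ell)|\mc{F}_p] = \bb{E}'[T(\tau_{\overline{\mc{C}_p}},\mc{C}_\ell)]$ with $\omega'$ furnishing the outside edges. Combining these ingredients yields
\begin{equation*}
\bb{E}[T(0,\mc{C}_\ell)|\mc{F}_p] = T(0,\mc{C}_p) + \bb{E}'[T(\tau_{\overline{\mc{C}_p}},\mc{C}_\ell)] + R_p,
\end{equation*}
where $|R_p|\le 4k$ collects the cut error $r_p$, its conditional expectation, and the two $k$-slips coming from the approximate $\mc{F}_p$-measurability of $T(0,\mc{C}_p)$.

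Running the same computation at scale $p-1$ produces an analogous identity with $|R_{p-1}|\le 4k$; subtracting yields (\ref{eqn:lem1final}) with $|R| = |R_p - R_{p-1}|\le 8k$. The main obstacle is the Markov step: one must argue that the innermost-circuit condition is genuinely interior to $\overline{\mc{C}_p}$ so that the resampling identity $\bb{E}[\,\cdot\,|\mc{F}_p] = \bb{E}'[\,\cdot\,]$ applies. On $\bb{Z}^2$ this follows from planar duality, but on the slab the separating object is the three-dimensional cylinder $\overline{\mc{C}_p}$ rather than a loop, and one has to check carefully that any competing inner circuit must live in $\inte(\overline{\mc{C}_p})$ (and so be $\mc{F}_p$-measurable) before the resampling can be justified.
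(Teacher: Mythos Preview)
Your proposal is correct and follows essentially the same route as the paper: split the geodesic at its first hit of $\overline{\mc{C}_p}$, use Fact~\ref{fact:3} and free travel along $\mc{C}_p$ to absorb the $O(k)$ discrepancies, take conditional expectations via the Markov property across $\overline{\mc{C}_p}$, and subtract the $p$ and $p-1$ versions to get $|R|\le 4k+4k=8k$. The only cosmetic difference is that the paper works with $T(0,\overline{\mc{C}_p})$ (which is genuinely $\mc{F}_p$-measurable, since any path can be truncated at its first hit of $\overline{\mc{C}_p}$) rather than arguing, as you do, that $T(0,\mc{C}_p)$ is approximately $\mc{F}_p$-measurable; your explicit flag on the Markov step is a point the paper asserts in its outline rather than inside the proof.
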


\begin{proof}

Let us fix configuration of edges $\omega$.
Any path $\pi$ that traverses from $0$ to $\overline{\mc{C}_{\ell}}$ must intersect $\overline{\mc{C}_{p-1}}$ and $\overline{\mc{C}_p}$. 

Let $\pi$ be the geodesic from 0 to $\overline{\mc{C}_{\ell}}$  on $\bb{Z}^2 \times [k]$ and let $\pi_1$ be the part of $\pi$ from $0$ to its first intersection with $\overline{\mc{C}_p}$.

 Let $\pi_2$ be the part of $\pi$ from its first intersection with $\overline{\mc{C}_p}$ to its first intersection with $\overline{\mc{C}_{\ell}}$.

We claim

\begin{align}
\label{eqn:A}
T(\pi_1) = T(0, \overline{\mc{C}_p})+ R_1,
\end{align}

where $|R_1| \leq 2k$ holds with probability $1$.  

Obviously, we have $T(\pi_1) \geq T(0, \overline{\mc{C}_p}).$ Let $\pi_p$ be the geodesic from $0$ to $\overline{\mc{C}_p}$.
Indeed, starting from $0$ one can first travel to $\overline{\mc{C}_p}$ via $\pi_p$, then move to $\mc{C}_p$ with a cost of at most $k$, move freely on the circuit $\mc{C}_p$, and finally reach the endpoint $x \in \mc{C}_p$ that satisfies $|\tau_{\overline{\mc{C}_p}} - x | \leq k.$ Then Fact \ref{fact:3} implies 
$$
|T(0,x) - T(\pi_1)| \leq k
$$
and similarly 
$$
|T(0,x) - T(0, \overline{\mc{C}_p})| \leq k.
$$

By a similar argument, we have

\begin{align}
\label{eqn:pi2}
T(\pi_2) = T(\tau_{\overline{\mc{C}_p}}, \overline{\mc{C}_{\ell}}) + \triangle T_{p, \ell},
\end{align}

where $| \triangle T_{p, \ell}| \leq 2k $ holds with probability $1$.

Combining Equation (\ref{eqn:A}) and (\ref{eqn:pi2}), we obtain

\begin{align}
\label{eqn:f1}
\bb{E}[T(0, \mc{C}_{\ell}) | \mc{F}_p] = T(0, \overline{\mc{C}_p})+ \bb{E}'[ T( \tau_{\overline{\mc{C}_p}}, \mc{C}_{\ell})]+ R_3
\end{align}
where $|R_3| \leq 4k$.

Similarly,
\begin{align}
\label{eqn:f2}
\bb{E} [T(0,\mc{C}_{\ell}) | \mc{F}_{p-1}] = T(0, \overline{\mc{C}_{p-1}}) + \bb{E}'[T(\tau_{\overline{\mc{C}_{p-1}}}, \mc{C}_{\ell})]+R_4,
\end{align}

where $|R_4|\leq 4k$. 

Combining the above two Equations (\ref{eqn:f1}) and (\ref{eqn:f2}) yields the conclusion.

\end{proof}





\begin{remark}
As in \cite{kestenzhang}, by a little extra work one can write 
$\triangle_p = \overline{\triangle_p}+ R_p$, where $|R_p| \leq 8k$, such that a truncated version of the random variables $\overline{\triangle_p}$ are independent. We will not use this fact in the remaining proof.
\end{remark}

\begin{definition}
Let us define 
\begin{equation}
\label{def:ell}
n(p, \omega, \omega') = m(m(p, \omega)+1, \omega').
\end{equation}
Using the definition of $m(p)$ in (\ref{eqn:defm}), $n$ is the first geometric scale after $m(p)$ that contains an open circuit.
\end{definition}

\begin{lemma}    
\label{lem:2}
Let $n(p, \omega, \omega')$ be as defined as (\ref{def:ell}) above.
Then 
\begin{equation}
\label{eqn:2.24}
\triangle_p(\omega) = T(\mc{C}_{p-1}(\omega), \mc{C}_{p}(\omega))(\omega)+\bb{E}' T(\mc{C}_p(\omega), \mc{C}_{n(p, \omega, \omega')}(\omega'))(\omega') - \bb{E}' T(\mc{C}_{p-1}(\omega), \mc{C}_{n(p, \omega, \omega')} (\omega'))(\omega') + R'
\end{equation}
 where $|R'| \leq 8k$ with probability $1$.
\end{lemma}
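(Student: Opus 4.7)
\textbf{Proof plan for Lemma \ref{lem:2}.} The plan is to start from the representation given by Lemma \ref{lem:1} and perform two independent geometric ``renormalizations'', each of which costs an $O(k)$ additive error because of the thickness of the slab. The two substitutions take care of the inner term and the outer expectation term respectively.

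First, I would rewrite the inner increment $T(0,\mc{C}_p) - T(0,\mc{C}_{p-1})$ as $T(\mc{C}_{p-1},\mc{C}_p)$ up to $O(k)$. By Fact \ref{fact:2.7}, $\overline{\mc{C}_{p-1}}$ is a cylinder surrounding $\mathbf{0}$ contained in the interior of $\overline{\mc{C}_p}$, so any geodesic $\pi$ from $\mathbf{0}$ to $\mc{C}_p$ crosses $\overline{\mc{C}_{p-1}}$; let $y$ be its first crossing point, so that by Fact \ref{fact:1}, $T(0,\mc{C}_p) = T(0,y) + T(y,\mc{C}_p)$. Fact \ref{fact:3} gives $|T(0,y) - T(0,\mc{C}_{p-1})| \leq k$, and the usual ``ride along the closed cylinder for free and then jump vertically'' argument (using that $\mc{C}_{p-1}$ has zero passage time and vertical excursions inside the slab cost at most $k$) gives $|T(y,\mc{C}_p) - T(\mc{C}_{p-1},\mc{C}_p)| \leq k$. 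This yields $T(0,\mc{C}_p) - T(0,\mc{C}_{p-1}) = T(\mc{C}_{p-1},\mc{C}_p) + O(k)$.

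Next, I would perform the analogous operation ``from the outside''. Under the independent configuration $\omega'$, the scale $n(p,\omega,\omega')$ is, by definition \eqref{def:ell}, the first geometric scale after $m(p,\omega)$ at which $\omega'$ produces a closed circuit, so $\mc{C}_n(\omega')$ strictly surrounds both $\mc{C}_{p-1}(\omega)$ and $\mc{C}_p(\omega)$. Apply the same crossing decomposition to the $\omega'$-geodesic from $\tau_{\overline{\mc{C}_p}}$ to $\mc{C}_\ell$: it must cross $\overline{\mc{C}_n}$, so
\begin{align*}
T\bigl(\tau_{\overline{\mc{C}_p}}, \mc{C}_\ell\bigr) &= T\bigl(\tau_{\overline{\mc{C}_p}}, \mc{C}_n\bigr) + T(\mc{C}_n,\mc{C}_\ell) + O(k), \\
T\bigl(\tau_{\overline{\mc{C}_{p-1}}}, \mc{C}_\ell\bigr) &= T\bigl(\tau_{\overline{\mc{C}_{p-1}}}, \mc{C}_n\bigr) + T(\mc{C}_n,\mc{C}_\ell) + O(k),
\end{align*}
where each $O(k)$ holds pointwise. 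The common ``tail'' term $T(\mc{C}_n,\mc{C}_\ell)$ cancels upon subtraction, and a second application of Fact \ref{fact:3} and the vertical-bridge argument replaces $T(\tau_{\overline{\mc{C}_p}},\mc{C}_n)$ by $T(\mc{C}_p,\mc{C}_n)$ (and similarly for $p-1$) up to another $O(k)$. Taking $\bb{E}'$ preserves the pointwise $O(k)$ bounds.

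Adding the two renormalizations to the identity from Lemma \ref{lem:1} gives the formula in \eqref{eqn:2.24}; the residual $R'$ is the sum of $R$ and the four geometric correction terms, each bounded by a small multiple of $k$ almost surely, yielding $|R'| \leq 8k$ after a careful accounting of constants. The only genuine difficulty, and the step to double-check, is verifying the geometric containment that lets the tail $T(\mc{C}_n,\mc{C}_\ell)$ cancel cleanly: one must check that $\mc{C}_n(\omega')$ indeed separates $\overline{\mc{C}_p}(\omega)$ from $\mc{C}_\ell(\omega')$, which is immediate from \eqref{def:ell} since $\mc{C}_n \subset \overline{A}(m(m(p,\omega)+1,\omega'))$ lies strictly outside $S(m(p,\omega)+1) \supset \overline{\mc{C}_p}$. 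Everything else is a routine repetition of the estimates already used in the proof of Lemma \ref{lem:1}.
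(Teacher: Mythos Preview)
Your approach is correct and follows the same geometric route as the paper: verify that $\mc{C}_{n(p,\omega,\omega')}(\omega')$ strictly surrounds $\mc{C}_p(\omega)$ (which you do correctly via \eqref{def:ell}), then use the Lemma~\ref{lem:1} machinery of splitting geodesics at cylinder crossings and paying $O(k)$ for each vertical bridge. The paper's own proof is terser and slightly differently structured: rather than starting from the conclusion of Lemma~\ref{lem:1} and manipulating it, the paper simply says ``the same argument as for Lemma~\ref{lem:1} leads to \eqref{eqn:2.24}'', i.e.\ it redoes the decomposition of $\bb{E}[T(0,\mc{C}_\ell)\mid\mc{F}_p]$ and $\bb{E}[T(0,\mc{C}_\ell)\mid\mc{F}_{p-1}]$ from scratch using the circuit $\mc{C}_n$ in place of $\mc{C}_\ell$. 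That direct rerun produces the $8k$ bound immediately, whereas your route---taking the $R$ from Lemma~\ref{lem:1} with $|R|\le 8k$ and then adding four further $O(k)$ corrections---cannot literally give $|R'|\le 8k$; you would end up with a larger multiple of $k$. This is only a bookkeeping issue (any fixed $Ck$ bound suffices for the rest of the paper), but if you want the stated constant you should redo the splitting argument from the start rather than compound errors on top of Lemma~\ref{lem:1}.
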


\begin{proof}
The statement is similar to Lemma \ref{lem:1} but with different notation. 
The scale $n(p, \omega, \omega')$ is found with the following procedure. 
First, determine $m(p,\omega)$. Then by exploring $p_c$-open clusters, one can find the smallest $t \geq m(p, \omega) +1$ such that there is an open circuit surrounding $\mathbf{0}$ in $A(t)$ in configuration $\omega'$. 
The value of $t$ is $n(p, \omega, \omega')$, and the innermost open circuit in $A(n(p, \omega, \omega'))$ surrounding the origin in configuration $\omega'$ is $\mc{C}_{n(p, \omega, \omega')}(\omega').$ 
We have that 
\begin{align*}
n(p, \omega, \omega') \geq m(p, \omega) +1,
\end{align*}
which lets us see that
\begin{align*}
\mc{C}_p (\omega) = \mc{C}_{m(p, \omega)}(\omega) \subset A(m(p, \omega)) \subset \text{int}(\mc{C}_{n(p, \omega, \omega')}(\omega')).
\end{align*}
Then the same argument as for Lemma \ref{lem:1} leads to (\ref{eqn:2.24}).
\end{proof}

\begin{lemma}
\label{lem:3}
There are constants denoted by $C_i >0 $ such that for $p, q \geq 1$ we have that,

\begin{equation}
\label{eqn:2.28}
\bb{P} [ m(p) - p \geq t ] \leq e^{-C_5 t } \; \forall t, p \geq 0
\end{equation}

\begin{equation}
\label{eqn:2.29}
\bb{P} [| \triangle_p | \geq x ] \leq C_6 e^{- c \sqrt{x} } \; \text{ for } x \text{ large enough}
\end{equation}

\begin{equation}
\label{eqn:2.30}
\bb{P} [ \max_{0 \leq p \leq q} | \triangle_p | \geq \epsilon q^{1/2} ] \leq 2 C_6 q e^{- c_1 q^{1/4} } \epsilon^{\frac{1}{2}}, \forall \epsilon >0 
\end{equation}

\begin{equation}
\label{eqn:2.31}
\bb{E} [ \max_{ 0 \leq p \leq q } \triangle_p^2 ] \leq C_7 q
\end{equation}

\begin{equation}
\label{eqn:2.32}
C_8 q \leq \sum_{p=0}^q \bb{E} \triangle_p^2 \leq C_9 q
\end{equation}
 
\end{lemma}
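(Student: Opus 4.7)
My plan is to prove the five bounds in sequence; the tail estimate (\ref{eqn:2.29}) is the analytic heart, while the others follow by union-bound, moment integration, and a pivotality argument. For (\ref{eqn:2.28}), observe that the annuli $A(p), A(p+1), \dots$ involve disjoint edge sets, so the events $E_s := \{A(s) \text{ contains a } p_c\text{-open circuit surrounding } \mathbf{0}\}$ are independent across $s$; by Corollary \ref{cor:1}(\ref{item:1}), $\bb{P}[E_s] \geq c > 0$ uniformly in $s$. Since $\{m(p)-p \geq t\} = \bigcap_{s=p}^{p+t-1} E_s^c$, we obtain $\bb{P}[m(p)-p \geq t] \leq (1-c)^t$, which gives (\ref{eqn:2.28}) with $C_5 = -\log(1-c)$.

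For (\ref{eqn:2.29}), starting from Lemma \ref{lem:2} and using $\mc{C}_{p-1} \subset \text{int}(\mc{C}_p)$, every $\omega'$-path from $\mc{C}_{p-1}(\omega)$ to $\mc{C}_{n(p,\omega,\omega')}(\omega')$ must cross $\mc{C}_p(\omega)$, so by Fact \ref{fact:3} and the triangle inequality
\begin{equation*}
|\triangle_p| \leq T_\omega(\mc{C}_{p-1}, \mc{C}_p) + \bb{E}'\bigl[T_{\omega'}(\mc{C}_{p-1}(\omega), \mc{C}_p(\omega))\bigr] + O(k).
\end{equation*}
It therefore suffices to show $\bb{P}[T(\mc{C}_{p-1}, \mc{C}_p) \geq x] \leq C e^{-c\sqrt{x}}$ uniformly in $p$. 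I would prove this by decomposing the region between $\mc{C}_{p-1}$ and $\mc{C}_p$ into dyadic subannuli and applying Theorem \ref{thm:BCP} independently across them, in the style of Kesten \cite{Kesten86}: a large passage time forces many consecutive dyadic subannuli to lack shortcut $p_c$-open circuits, and a scale-vs-length optimization (scale $\sim \log x$ against length $\sim \sqrt{x}$) yields the $\sqrt{x}$ exponent. The same tail then controls the $\bb{E}'$-term, since conditional on $\mc{F}_p$ the random variable $T_{\omega'}(\mc{C}_{p-1}(\omega), \mc{C}_p(\omega))$ has the same law as $T_\omega(\mc{C}_{p-1}, \mc{C}_p)$.

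Estimate (\ref{eqn:2.30}) then follows by applying (\ref{eqn:2.29}) at $x = \epsilon \sqrt{q}$ and union-bounding over $p \leq q$. Integrating the tail in (\ref{eqn:2.29}) gives $\bb{E}\triangle_p^2 \leq \text{const}$ uniformly in $p$, yielding the upper bound of (\ref{eqn:2.32}), and then (\ref{eqn:2.31}) via $\bb{E}[\max_{p\leq q}\triangle_p^2] \leq \sum_{p\leq q}\bb{E}\triangle_p^2$. For the lower bound in (\ref{eqn:2.32}), I plan a resampling / Efron--Stein argument: with probability bounded below uniformly in $p$, one exhibits a configuration in $A(p)$ containing a pivotal edge whose flip shifts $T(0, \mc{C}_\ell)$ by at least $1$, producing a positive conditional-variance contribution at every scale.

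The chief obstacle is the critical FPP tail in (\ref{eqn:2.29}): Kesten--Zhang's \cite{kestenzhang} proof of the analogous $e^{-c\sqrt{x}}$ bound used planar duality combined with 2D RSW, and on the slab both ingredients must be replaced by Theorem \ref{thm:BCP} and Corollary \ref{cor:1} of \cite{wu}, while absorbing the $O(k)$ projection errors (already visible in Fact \ref{fact:3}) at each iteration. A secondary difficulty is the lower bound in (\ref{eqn:2.32}), where the pivotal-edge construction must be shown to have probability uniformly bounded below in $p$; this too should follow from slab RSW applied to a fixed small sub-box of $A(p)$.
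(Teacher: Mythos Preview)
Your treatment of (\ref{eqn:2.28}), (\ref{eqn:2.30}), (\ref{eqn:2.31}) and the upper bound in (\ref{eqn:2.32}) is fine and matches the paper. The genuine gap is in your reduction for (\ref{eqn:2.29}). The displayed inequality
\[
|\triangle_p| \leq T_\omega(\mc{C}_{p-1}, \mc{C}_p) + \bb{E}'\bigl[T_{\omega'}(\mc{C}_{p-1}(\omega), \mc{C}_p(\omega))\bigr] + O(k)
\]
does not follow from Lemma \ref{lem:2}. In the $\omega'$-configuration the curve $\mc{C}_p(\omega)$ is \emph{not} open, so you cannot travel freely along it; hence the triangle-inequality / Fact \ref{fact:3} argument you invoke does not bound $\bb{E}'T_{\omega'}(\mc{C}_{p-1},\mc{C}_n)-\bb{E}'T_{\omega'}(\mc{C}_p,\mc{C}_n)$ by $\bb{E}'T_{\omega'}(\mc{C}_{p-1}(\omega),\mc{C}_p(\omega))$. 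Relatedly, your claim that $T_{\omega'}(\mc{C}_{p-1}(\omega),\mc{C}_p(\omega))$ has, conditionally on $\mc{F}_p$, the same law as $T_\omega(\mc{C}_{p-1},\mc{C}_p)$ is false: the latter is $\mc{F}_p$-measurable (hence deterministic under the conditioning), while the former is the passage time in a fresh i.i.d.\ environment between two fixed curves, a genuinely different object. The paper instead keeps the $\omega'$-open circuit $\mc{C}_{n(p,\omega,\omega')}(\omega')$ as the outer target, bounds $|\triangle_p|\le T_\omega(\mc{C}_{p-1},\mc{C}_p)+\bb{E}'T_{\omega'}(\mc{C}_{p-1}(\omega),\mc{C}_n(\omega'))+O(k)$, and then controls the second term by splitting on $\{n(p,\omega,\omega')\ge m(p,\omega)+1+t\}$ (handled by (\ref{eqn:2.28})) and estimating $\bb{P}[T(\partial S(2^{p-1}),\partial S(2^{m+1+t}))\ge y]$.

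Your proposed mechanism for the latter tail --- ``large passage time forces many consecutive dyadic subannuli to lack shortcut open circuits'' --- is not a proof as stated: open circuits in annuli are circumferential and do not by themselves bound radial passage time, while open radial crossings in disjoint annuli need not concatenate into a single path. The paper's argument is different and more robust: by max-flow--min-cut one has $T(\partial S(2^j),\partial S(2^\ell))=\rho(j,\ell)$, the maximal number of edge-disjoint \emph{closed} dual circuits separating the two boundaries; the BK inequality then gives $\bb{P}[\rho\ge r\cdot M]\le (\bb{P}[\rho\ge M])^r$, and RSW (Corollary \ref{cor:1}) shows $\bb{P}[\rho\ge 2C_{10}(\ell-j)+1]\le \tfrac12$. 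Optimizing $t\sim\sqrt{x}$ then produces the $e^{-c\sqrt{x}}$ tail. For the lower bound in (\ref{eqn:2.32}), your Efron--Stein/pivotal-edge idea is a legitimate alternative to the paper's route, which instead shows directly $\bb{P}[\triangle_p\ge 1]\ge c>0$ by using RSW and FKG to force $m(p)=p+1$ together with at least $2C_{17}+1+8k$ disjoint closed dual circuits inside $A(p)$.
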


\begin{proof}

We know that $m(p) - p \geq t $ occurs if and only if 
there is no blocking surface surrounding the origin in any of the 
annuli $A(p) , A(p+1), \cdots, A(p'-1)$, with $p' = p+t$. 

It is known (\cite{smythewierman} or \cite{kesten82} or \cite{grimmett89}) that there is a constant $C_5>0$ such that 

\begin{equation}
\bb{P} [ N_j := \text{ there is no blocking surface surrounding the origin in } A(j) ] \leq e^{-C_5}, j \geq 0. 
\end{equation}

The annuli $A(j)$ are disjoint, and therefore the events $N_j$, of no blocking surfaces surrounding the origin, are independent for distinct $j$.  So (\ref{eqn:2.28}) follows from this fact.

By (\ref{eqn:2.28}), we have the following for each fixed $\omega$,

\begin{equation}
\bb{P}' [n(p, \omega, \omega') \geq m(p, \omega)+1+t ] \leq e^{-C_5 t}. 
\end{equation}

Now we want to obtain (\ref{eqn:2.29}).

By (\ref{eqn:2.24}) of Lemma \ref{lem:2}, 

\begin{equation}
\label{eqn:2.35}
|\triangle_p (\omega) | \leq T(\mc{C}_{p-1}(\omega), \mc{C}_p(\omega))(\omega) + \bb{E}' T(\mc{C}_{p-1}(\omega), \mc{C}_{n(p, \omega, \omega')}(\omega'))(\omega') + 8k.
\end{equation}

It is easy to see $T(\mc{C}_{p-1}(\omega), \mc{C}_p(\omega)) \leq \mc{C} 2^p$, which is bounded.

Now, we estimate for fixed $\omega$ the tail probability 
\begin{equation}
\label{eqn:2.36}
\bb{P}' [ T ( \mc{C}_{p-1}(\omega), \mc{C}_{n(p, \omega,\omega')}(\omega')) > y].
\end{equation}

For the case when $p=0$ we shall interpret $S(2^{-1})$ be the origin $\mathbf{0}$, and let $A(-1):= S(1).$
For a square $S$ let $\partial S$ denote its topological boundary, let $\text{int}(S)$ denote its interior with the conditions $\partial S(2^{-1})= \mathbf{0}$ and $\text{int}(S(2^{-1})) = \emptyset$.

Given a fixed $\omega$, we are given a $m = m(p, \omega)$ and $\mc{C}_p (\omega) \subset A(m)$. For $r \geq n(p, \omega, \omega')+1$, any path on $\bb{S}_k$ from $\partial S(2^{p-1})$ to $\partial S(2^r)$ must intersect $\mc{C}_{p-1}(\omega)$ and $\mc{C}_{\ell(p, \omega, \omega')}.$

Therefore, Fact \ref{fact:3} implies 

\begin{equation}
\label{eqn:2.37}
T( \mc{C}_{p-1} (\omega), \mc{C}_{n(p, \omega, \omega')} (\omega'))(\omega') \leq T( \partial S(2^{p-1}),\partial S(2^r))(\omega') + 4k.
\end{equation}

So it follows that for $t=0,1, \cdots,$

\begin{align}
\bb{P}' [T( \mc{C}_{p-1}(\omega), \mc{C}_{n(p, \omega, \omega')}(\omega'))(\omega') \geq y] &  \\
\leq \bb{P}' [ \ell(p, \omega, \omega') \geq m(p, \omega) +1+t]& +\bb{P}'[ T(\partial S(2^{p-1}), \partial S(2^{m(p, \omega) +1 + t}))(\omega') \geq y ]  -4k \\
\leq e^{-C_5 t} + \bb{P}' [ T(\partial S(2^{p-1}), &\partial S(2^{m(p, \omega)+1+t})(\omega') \geq y ] -4k. \label{eqn:2.38}
\end{align}

To estimate the RHS of the above, let us define

$$
\kappa( j, k , \omega') = \text{ minimal number of $p_c$-closed edges in any path from } \partial S(2^j) \text{ to } \partial S( 2^k) \text{ in } \omega'
$$

$$
\rho ( j, k , \omega' ) = \text{ maximal no. of edge-disjoint closed dual circuits which surround } S(2^j) \text{ in } S(2^k) \setminus S(2^j) \text{ in } \omega'. 
$$

We may see that the number of closed edges $\kappa$ is equal to the number of dual surfaces $\rho$, 

\begin{align}
\label{eqn:closededgesdualsurfaces}
	\kappa( j, k, \omega') = \rho(j, k, \omega'). 
\end{align}

This is an example of the max-flow-min-cut theorem. For completeness we give a sketch of the proof in the Appendix.

We need to estimate the tail probability of $\rho$, and to do this we introduce events of $\Omega'$ given by

\begin{align}
	\label{eqn:defG}
	G(y) = G(y,j,k) = \{ \rho(j,k,\omega) \geq \lfloor y \rfloor \} = \{ \text{ There exist at least } \lfloor y \rfloor 
\end{align}
$$
\text{ disjoint closed dual circuits surrounding } S(2^j) \text{ in } S(2^k) \setminus S(2^j) \}.
$$

$\bb{P}(G(y))$ can be estimated using the BK inequality [\cite{BergKesten}], stated as follows.

\begin{theorem} (BK Inequality)
For events $G_1, \cdots, G_r \subset \Omega'$ that depend on finitely many variables $J(e) := I [t(e,\omega') \text{ is open}]$,

\begin{align}
	\label{eqn:Gs}
	\bb{P}' [ G_1 \square G_2, \cdots \square G_r ] \leq \prod_{i=1}^r \bb{P}' [ G_i ]
\end{align}

where $ G_1 \square G_2, \cdots \square G_r $ is the event that $G_1, \cdots, G_r$ occur disjointly.

\end{theorem} 

To study $\bb{P}(G(y))$, we apply the inequality with events given by
\begin{align*}
	G_i = G ( 2 C_{10} ( k- j) + 1)
\end{align*}

which is a decreasing event; its characteristic function is a decreasing function of $J(e)$. 
\cite{BergKesten} have shown Equation \ref{eqn:Gs} for this case. 

By definition of $G$ in Equation (\ref{eqn:defG}),

\begin{align}
	\label{eqn:rGs}
	G ( r[ 2 C_{10} (k-j) + 1]) \subset G( 2 C_{10} (k-j) + 1) \square \cdots \square G(2 C_{10} (k-j) +1),
	\end{align}

with $r$ events on the RHS above (disjoint occurrence). 

Let us take 

\begin{align}
	r = [ \frac{y}{4 C_{10} (k-j)+1}]
\end{align}

Therefore, the following holds,

\begin{align}
	\label{eqn:2.48}
	\bb{P}' [ \nexists \text{ a path } \gamma: \partial S(2^j) \rightarrow \partial S(2^k) \text{ with } \leq r[ 2C_{10} (k-j)+1] \text{ closed edges }] 
	\end{align}
	\begin{align*}
	&\leq \bb{P}' [ \rho \geq r[ 2C_{10} (k-j)+1] ] \\
	&\leq \bb{P}' [G(r[ 2C_{10} (k-j)+1] )] \\
	&\leq ( \bb{P}' [G(r[ 2C_{10} (k-j)+1] ) ] )^r \text{ by Equations (\ref{eqn:rGs}) and (\ref{eqn:Gs}) } \\
	& \leq 2 \cdot 2^{-C_{12} y/ (k-j)}.
	\end{align*}

If an event given by the LHS of Equation (\ref{eqn:2.48}) fails, then there exists a path $\pi: \partial S(2^j) \rightarrow \partial S(2^k)$ in $\omega'$ with at most
\begin{align*}
	s := \lfloor r [ 2 C_{10} (k-j) +1] \rfloor
	\end{align*}
closed edges. 
Now, $T( \partial S(2^j), \partial S(2^k))$ is dominated by $s$, and by our choice of $r$, is bounded by $\frac{y}{2} + 1$.

We take $j = p-1, k= m(p, \omega)+1+t$ to reach the estimate for Equation (\ref{eqn:2.36}). 
Combining Equation (\ref{eqn:2.38}) and (\ref{eqn:2.48}), for $t=0,1, \cdots$, 

\begin{align}
	\bb{P}' [ T(\mc{C}_{p-1}(\omega), \mc{C}_{n(p, \omega, \omega'} (\omega'))(\omega') \geq y ] \leq e^{-C_5 t} + 2 \cdot 2^{-C_{12} y/(m - p + t + 2)} + \bb{P}( s \geq y).
\end{align}

Taking $t = \lfloor \sqrt{y} \rfloor$, for constants $C_{15}, C_{16} \in (0, \infty)$ and for all $\omega \in \Omega, y \geq 0$, 

\begin{align}
	\label{eqn:2.52}
	\bb{P}' [ T (\mc{C}_{p-1}(\omega), \mc{C}_{n (p, \omega, \omega')} (\omega'))(\omega') \geq y] \leq C_{15} exp (-C_{16} \frac{y}{m-p+\sqrt{y}})
\end{align}

By the integration of Equation \ref{eqn:2.52} over $y$, an upper bound of the second term of the RHS of Equation (\ref{eqn:2.35}) is obtained,

\begin{align}
	\label{eqn:2.53}
\bb{E}' T(\mc{C}_{p-1}(\omega), \mc{C}_{n(p, \omega, \omega')}(\omega'))(\omega')  \leq	C_{17} [ m(p, \omega) - p +1]. 
\end{align}

Therefore, by Equation (\ref{eqn:2.35}), for $t=0, 1, \cdots, \lfloor x/2C_{17} \rfloor$, 

\begin{align}
	\label{eqn:2.54}
	\bb{P} [ | \triangle_p (\omega)| \geq x] \leq \bb{P} [m(p,\omega)- p \geq t]+ \bb{P}[ T(\mc{C}_{p-1}(\omega), \mc{C}_p (\omega))(\omega) \geq x/2, m(p, \omega)- p <t]. 
\end{align}

The 2nd term of the RHS above is at most
\begin{align}
	\bb{P} [T(\partial S(2^{p-1}), \partial S (2^{p+1}))(\omega) \geq x/2] \leq 2 \cdot 2^{-C_{12}x/2(t+1)}.
\end{align}

Therefore, Equations (\ref{eqn:2.54}) and (\ref{eqn:2.28}) let us conclude for $t \leq x/2C_{17}$,

\begin{align}
	\bb{P} [ |\triangle_p(\omega)| \geq x] \leq e^{-C_5 t} + 2 \cdot 2^{-C_{12}x/2(t+1)}.
	\end{align}

Taking $t = \lfloor \sqrt{x} \rfloor$, Equation (\ref{eqn:2.29}) of Lemma \ref{lem:1} follows. 

From (\ref{eqn:2.29}), (\ref{eqn:2.30}) and (\ref{eqn:2.31}) clearly follow, as well as the second inequality of (\ref{eqn:2.32}). We only need to show the first inequality of (\ref{eqn:2.32}).
  
  By Equations (\ref{eqn:2.24}) and (\ref{eqn:2.53}),  
  \begin{align*}
  	\triangle_p (\omega) \geq T(\mc{C}_{p-1}(\omega), \mc{C}_p(\omega)) (\omega) - C_{17} [ m(p, \omega) - p +1] - 8k.
  \end{align*}

Observe that a path which crosses $k$ closed dual circuits must have a passage time of at least $k$. So for $p+2 \leq q$,

$$
\bb{E} \triangle_p^2 \geq \bb{P} [\triangle_p \geq 1] \geq \bb{P} [ m(p, \omega)= p+1 \text{ and } T(\mc{C}_{p-1}(\omega), \mc{C}_p(\omega))(\omega) \geq 2C_{17}+1+8k]
$$
\begin{align}
	\geq \bb{P} [ \mc{C}_{p-1} (\omega) \subset A(p-1) \nexists \text{ open circuit in } A(p) \text{ but } \exists \text{ at least } (2C_{17}+1+8k) 
\end{align}
$$
\text{ edge-disjoint closed dual circuits surrounding } S(2^{p-1}) \subseteq A(p) \text{ and there is an open circuit in } A(p+1). ]
$$

Let 
$$
\mc{E}_1 := \{ \text{There exist open circuits surrounding } \mathbf{0} \text{ in } A(p-1) \text{ and } A(p+1) \},
$$

$$
\mc{E}_2 := \{ \text{ There does not exist an open circuit surrounding } \mathbf{0} \text{ in } A(p) \},
$$
and
$$
\mc{E}_3 := \{ \text{There exist at least } (2C_{17}+1+8k) \text{ edge-disjoint closed dual circuits surrounding } S(2^{p-1}) \text{ in } A(p) \}.
$$

By the independence of edges in $A(p-1), A(p), A(p+1)$ along with the Harris-FKG inequality,  

\begin{align}
	\label{eqn:2.57}
	\bb{E} \triangle_p^2 \geq \bb{P} (\mc{E}_1) \cdot \bb{P} (\mc{E}_2) \cdot \bb{P} (\mc{E}_3).
\end{align}

The Russo-Seymour-Welsh Theorem (Cor.\ref{cor:1}) implies that the probabilities on the RHS of Equation (\ref{eqn:2.57}) are bounded away from $0$. We can therefore see that (\ref{eqn:2.32}) follows.  

\end{proof}

\begin{lemma}
\label{lemm:4}
The following holds as $q \rightarrow \infty$, 
\begin{equation}
\label{lem:4}
\frac{T(\mathbf{0}, \mc{C}_{m(q)}) - \bb{E} T(\mathbf{0}, \mc{C}_{m(q)})}{[ \sum_{p=0}^q \bb{E} \triangle_{p,q}^2 ]^{1/2}} \rightarrow N(0,1) \text{ in distribution.}
\end{equation}
\end{lemma}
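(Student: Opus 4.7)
The plan is to apply McLeish's martingale central limit theorem \cite{McLeish} to the triangular array $X_{p,q} := \triangle_{p,q}/s_q$, $0 \le p \le q$, where $s_q^2 := \sum_{p=0}^q \bb{E}\triangle_{p,q}^2$. The sequence $(\triangle_{p,q})_p$ is a martingale difference sequence for the filtration $(\mc{F}_p)$, and the numerator of (\ref{lem:4}) equals $\sum_{p=0}^q \triangle_{p,q}$ by (\ref{eqn:martingale}) together with Fact \ref{fact:2.7} (which gives $\mc{C}_{m(q)} = \mc{C}_q$). McLeish's theorem then delivers the CLT provided (i) $\max_p |X_{p,q}| \to 0$ in probability, (ii) $\sup_q \bb{E}[\max_p X_{p,q}^2] < \infty$, and (iii) $\sum_p X_{p,q}^2 \to 1$ in probability.

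By (\ref{eqn:2.32}) we have $s_q^2 \asymp q$, so conditions (i) and (ii) are immediate from Lemma \ref{lem:3}. Condition (i) follows from (\ref{eqn:2.30}): after dividing by $s_q \asymp q^{1/2}$, the probability that $\max_p |X_{p,q}|$ exceeds any fixed $\epsilon' > 0$ is bounded by the right-hand side of (\ref{eqn:2.30}) with $\epsilon \sim \epsilon'$, which tends to zero. Condition (ii) follows from (\ref{eqn:2.31}) combined with $s_q^2 \geq C_8 q$.

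The substantive step is condition (iii). Since $\bb{E}\sum_p X_{p,q}^2 = 1$ by construction, it suffices to show $\Var\!\left(\sum_{p=0}^q \triangle_{p,q}^2\right) = o(q^2)$. Expand the variance as $\sum_p \Var(\triangle_{p,q}^2) + 2\sum_{p_1 < p_2}\mathrm{Cov}(\triangle_{p_1,q}^2,\triangle_{p_2,q}^2)$. The diagonal terms are uniformly $O(1)$: integrating the stretched-exponential tail (\ref{eqn:2.29}) yields $\bb{E}\triangle_p^4 \le C$, contributing $O(q)$ in total. For the cross terms I plan to exploit approximate locality. By Lemma \ref{lem:2} and the exponential decay (\ref{eqn:2.28}), on the event $E_{p_i} := \{m(p_i) - p_i \leq \delta\}$ with $\delta := \lfloor |p_2 - p_1|/3 \rfloor$, the increment $\triangle_{p_i}$ can be approximated in $L^2$ by a truncated version $\widetilde{\triangle}_{p_i}$ measurable with respect to edges in an annular region at scale $p_i + O(\delta)$. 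For $|p_2 - p_1|$ large these regions are disjoint, so $\widetilde{\triangle}_{p_1}$ and $\widetilde{\triangle}_{p_2}$ are independent. Combining the exponentially small truncation error with the $L^4$ bound on $\triangle_p$ via Cauchy--Schwarz gives $|\mathrm{Cov}(\triangle_{p_1}^2,\triangle_{p_2}^2)| \leq C e^{-c|p_2 - p_1|}$, so the cross sum is $O(q)$ as well.

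The principal obstacle is the covariance estimate in the previous paragraph: one must carefully identify a truncation of $\triangle_p$ as a function of the edges in a bounded annular region, using the representation in Lemma \ref{lem:2} together with the fact that both $m(p,\omega)$ and $n(p,\omega,\omega')$ exceed $p$ only with exponentially small probability. The residual contribution from $\{E_{p_1}^c \cup E_{p_2}^c\}$ must then be controlled in $L^2$ after squaring, in the spirit of the remark following Lemma \ref{lem:1}, but used here only approximately for distant scales rather than for an exact independence statement. Once condition (iii) is verified, McLeish's theorem yields (\ref{lem:4}).
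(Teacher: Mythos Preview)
Your proposal is correct and follows the same overall strategy as the paper: apply McLeish's CLT to $X_{p,q}=\triangle_{p,q}/s_q$, read off conditions (i) and (ii) from (\ref{eqn:2.30})--(\ref{eqn:2.32}), and reduce condition (iii) to a variance bound for $\sum_p \triangle_{p,q}^2$ via a truncation that localizes $\triangle_p$ to edges in a bounded annular region.

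The only difference is in the truncation scheme for condition (iii). The paper truncates \emph{once}, setting $\tilde{\triangle}_{p,q}=\triangle_{p,q}\,I[m(p)\le p+(3/C_5)\log q]$; then $\tilde{\triangle}_{p,q}$ and $\tilde{\triangle}_{r,q}$ are independent whenever $|p-r|\ge (3/C_5)\log q+2$, and one simply counts $O(\log q)$ nonzero covariances per index to get $\Var\bigl(\sum_p \tilde{\triangle}_{p,q}^2\bigr)=O(q\log q)$. You instead truncate \emph{pairwise} at $\delta\sim|p_1-p_2|/3$ and aim for the sharper bound $|\mathrm{Cov}(\triangle_{p_1}^2,\triangle_{p_2}^2)|\le Ce^{-c|p_1-p_2|}$, yielding $O(q)$. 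Your route gives a tighter (though unnecessary) estimate at the cost of extra bookkeeping: you must redo the truncation for every pair and control the residual via Cauchy--Schwarz and higher moments of $\triangle_p$, whereas the paper's single $\log q$ cut makes the independence exact and the argument shorter. Both approaches rest on the same structural fact, namely that $\triangle_p\,I[m(p)\le p+\delta]$ is determined by edges in $S(2^{p+\delta+O(1)})\setminus S(2^{p-1})$.
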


We prove Lemma \ref{lemm:4} by proving the three conditions of McLeish's Theorem [Theorem 2.3 \cite{McLeish}], recalled below.

\begin{theorem}
\label{thm:McLeish} (McLeish's Theorem)
Let $X_{n,i}$ be a martingale difference array that satisfies the following,
\begin{itemize}
\item $ \max_{i \leq k_n} |X_{n,i} | $ uniformly bounded in $\ell_2$-norm,
\item $\max_{i \leq k_n} |X_{n,i} |  \ra_p 0$
\item $\sum_i X_{n,i}^2 \ra_p 1$.
\end{itemize}
Then $S_n = \sum_{i=1}^{k_n} X_{n,i} \ra_{\omega} N(0,1).$ 
\end{theorem}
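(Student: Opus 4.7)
The plan is to prove McLeish's Theorem via characteristic functions: I will show that $\bb{E}[e^{it S_n}]\to e^{-t^2/2}$ for every $t\in\bb{R}$, so that L\'evy's continuity theorem yields $S_n\overset{d}{\to} N(0,1)$. The central device is the complex auxiliary product
$$T_n(t):=\prod_{i=1}^{k_n}\bigl(1+it\,X_{n,i}\bigr),$$
which satisfies $\bb{E}[T_n(t)]=1$ for every $n$. This identity will follow by expanding the product and conditioning successively on the underlying filtration $(\mc{F}_{n,i})$: every non-constant term contains some factor $X_{n,i}$ whose conditional expectation given $\mc{F}_{n,i-1}$ vanishes by the martingale-difference hypothesis.

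The next step is the McLeish factorization identity. From the expansion $\log(1+iu)=iu+u^2/2+O(|u|^3)$ near $u=0$, one obtains the pointwise identity $e^{itx}=(1+it\,x)\exp\!\bigl(-t^2 x^2/2+s(tx)\bigr)$ with a remainder $s(u)=O(|u|^3)$ uniformly on bounded sets. Taking the product over $i$ yields
$$e^{it S_n}=T_n(t)\,\exp\!\Bigl(-\tfrac{t^2}{2}\sum_i X_{n,i}^2+\sum_i s(tX_{n,i})\Bigr).$$
The remainder sum is controlled by $|t|^3\,\max_i|X_{n,i}|\cdot\sum_i X_{n,i}^2$ up to a constant, which tends to $0$ in probability by hypotheses (ii) and (iii); hypothesis (iii) also gives $\exp\!\bigl(-\tfrac{t^2}{2}\sum_i X_{n,i}^2\bigr)\to e^{-t^2/2}$ in probability. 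Therefore $e^{itS_n}-T_n(t)\,e^{-t^2/2}\to 0$ in probability.

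To upgrade this to convergence of expectations, note that $|e^{itS_n}|=1$ so the left-hand side is automatically uniformly integrable, and I only need a companion control on $T_n(t)$. A truncation step supplies it: replace $X_{n,i}$ by its centered truncation $\widetilde X_{n,i}=X_{n,i}\mathbf{1}\{|X_{n,i}|\le\delta\}-\bb{E}[\,\cdot\mid\mc{F}_{n,i-1}]$, verify that the three hypotheses transfer to the truncated array and that $S_n-\widetilde S_n\to 0$ in $L^2$ (this is where hypothesis (i), the uniform $\ell_2$-bound on $\max_i|X_{n,i}|$, is used), and then observe that the truncated product satisfies $|T_n(t)|^2\leq\prod_i(1+t^2\widetilde X_{n,i}^2)\leq\exp\!\bigl(t^2\sum_i\widetilde X_{n,i}^2\bigr)$, which on the high-probability event $\{\sum_i\widetilde X_{n,i}^2\leq 2\}$ is dominated by $e^{2t^2}$. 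Combined with $\bb{E}[T_n(t)]=1$, dominated convergence then yields $\bb{E}[e^{itS_n}]\to e^{-t^2/2}$.

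The main obstacle I anticipate is precisely this uniform-integrability step for the complex martingale $T_n(t)$: unlike $e^{itS_n}$ its modulus can be large on rare events, so converting the in-probability approximation $e^{itS_n}\approx T_n(t)e^{-t^2/2}$ into convergence of expectations requires the truncation device above. Hypothesis (i) enters the argument only at this point—it is exactly the input ensuring that the truncated and untruncated sums share the same weak limit—and once past this technical reduction, the martingale identity $\bb{E}[T_n(t)]=1$ closes the proof.
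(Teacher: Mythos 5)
The paper itself offers no proof of this statement---it is quoted verbatim from McLeish---so the comparison is with McLeish's original argument, whose skeleton you have reproduced correctly: the product martingale $T_n(t)=\prod_i(1+itX_{n,i})$ with $\bb{E}[T_n(t)]=1$, the identity $e^{ix}=(1+ix)\exp(-x^2/2+r(x))$ with $|r(x)|\le |x|^3$ for $|x|\le 1$, and the reduction of everything to uniform integrability of $T_n(t)$. The gap is in the step you yourself flag as the crux, and it is genuine. First, bounding $|T_n(t)|^2\le\exp\bigl(t^2\sum_i\widetilde X_{n,i}^2\bigr)\le e^{2t^2}$ \emph{only on the event} $\{\sum_i\widetilde X_{n,i}^2\le 2\}$ does not license dominated convergence: uniform integrability requires control of $T_n(t)$ on the complementary event as well, and a high-probability bound gives none (the number of factors $k_n$ is unbounded, so nothing a priori restrains $|T_n(t)|$ off that event). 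Second, your centered hard truncation $\widetilde X_{n,i}=X_{n,i}\mathbf{1}\{|X_{n,i}|\le\delta\}-\bb{E}[X_{n,i}\mathbf{1}\{|X_{n,i}|\le\delta\}\mid\mc{F}_{n,i-1}]$ does restore the martingale-difference property, but the claim $S_n-\widetilde S_n\to0$ in $L^2$ does not follow from the three hypotheses: by orthogonality $\bb{E}[(S_n-\widetilde S_n)^2]\le\sum_i\bb{E}[X_{n,i}^2\mathbf{1}\{|X_{n,i}|>\delta\}]$, and conditions (ii)--(iii) only give $\sum_i X_{n,i}^2\mathbf{1}\{|X_{n,i}|>\delta\}\to 0$ in probability; passing to expectations would need uniform integrability of $\sum_i X_{n,i}^2$, which is not assumed, while hypothesis (i) bounds only $\max_i X_{n,i}^2$ in $L^1$. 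Even convergence in probability is unclear, because off the truncation event the compensator terms $\sum_i\bb{E}[X_{n,i}\mathbf{1}\{|X_{n,i}|>\delta\}\mid\mc{F}_{n,i-1}]$ are not controlled by anything you have assumed.

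The repair is McLeish's own device: truncate predictably, not by the size of the current increment. Set $\widetilde X_{n,i}=X_{n,i}\mathbf{1}\{\sum_{j<i}X_{n,j}^2\le 2\}$. The indicator is $\mc{F}_{n,i-1}$-measurable, so the martingale-difference property is preserved with no recentering; moreover $\bb{P}[\widetilde X_{n,i}\ne X_{n,i}\text{ for some }i]\le\bb{P}[\sum_i X_{n,i}^2>2]\to 0$ by (iii), so the stopped and unstopped sums share the same weak limit and hypotheses (ii)--(iii) transfer. Crucially, one now has a \emph{pointwise} bound: splitting off the last nonzero factor,
\begin{equation*}
|T_n(t)|^2=\prod_i\bigl(1+t^2\widetilde X_{n,i}^2\bigr)\le \exp\Bigl(t^2\sum_{i<\tau}X_{n,i}^2\Bigr)\bigl(1+t^2X_{n,\tau}^2\bigr)\le e^{2t^2}\bigl(1+t^2\max_i X_{n,i}^2\bigr),
\end{equation*}
where $\tau$ is the last index with indicator equal to $1$. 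Hypothesis (i) then gives $\sup_n\bb{E}|T_n(t)|^2<\infty$, hence uniform integrability of $T_n(t)$---this is exactly where (i) enters. With that in hand, the remainder of your outline ($e^{itS_n}=T_n(t)\exp(-\tfrac{t^2}{2}\sum_iX_{n,i}^2+\sum_i r(tX_{n,i}))$, the in-probability convergence of the exponential factor via (ii)--(iii), and $\bb{E}[T_n(t)]=1$) closes the proof as you describe.
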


\begin{proof} (Lemma \ref{lem:4})

We may set 
$$
X_{p,q}  = \frac{ \triangle_{p,q}}{ [\sum_{p=0}^q \bb{E} \triangle_{p,q}^2 ]^{1/2}}
$$ 

which would let us express the left hand side of (\ref{lem:4}) as, 
\begin{equation}
\label{eqn:64}
\frac{T(\mathbf{0}, \mc{C}_{m(q)}) - \bb{E} T(\mathbf{0}, \mc{C}_{m(q)} )}{ [\sum_{p=0}^q \bb{E} \triangle_{p,q}^2 ]^{1/2}} = \sum_{p=0}^q X_{p,q}
\end{equation}

We now apply Theorem \ref{thm:McLeish} to (\ref{eqn:64}). 

By Lemma \ref{lem:3} Eq. (\ref{eqn:2.32}) we have 
$$
|X_{p,q}| \leq | \triangle_{p,q} | / [ C_8 q]^{1/2}
$$
 
 Conditions 1 and 2 of McLeish's Lemma follow directly from (\ref{eqn:2.30}) and (\ref{eqn:2.31}), since (\ref{eqn:2.30}) gives a tail bound for $\max_{i \leq k_n } | X_{n,i} | $ and (\ref{eqn:2.31}) gives the bound on the $\max_{i \leq k_n} |X_{n,i} |^2$.  

Now it remains to prove the last condition,
\begin{equation}
\sum_{p=0}^q X_{p,q}^2 \rightarrow 1 \text{ in probability. }
\end{equation}

The condition is equivalent to 
\begin{equation}
\label{eqn:2.60}
 \frac{1}{q} \sum_{p=0}^q [ \triangle_{p,q}^2 - \bb{E} \triangle_{p,q}^2 ] \rightarrow 0 \text{ in probability.}
\end{equation}

This is a weak law of large numbers type statement, and we prove it by bounding the variance of (\ref{eqn:2.60}). 


We denote by 

$$
\tilde{\triangle}_{p,q} = \triangle_{p,q} I [m(p) \leq p + \frac{3}{C_5} \log q].
$$

Then, 
\begin{align*}
\bb{P} \{ \triangle_{p,q} \neq \tilde{\triangle}_{p,q} \text{ for some } p \leq q \} &\leq \sum_{p=0}^q \bb{P} \{ m(p) - p \geq \frac{3}{C_5} \log q \} \\ 
&\leq (q+1) e^{-3 \log q} \text{ by (\ref{eqn:2.28})} \\
&\rightarrow 0.  
\end{align*}

Apply (\ref{eqn:2.29}) and (\ref{eqn:2.28}) and we have 
\begin{align}
\sum_{p=0}^q [\bb{E} \triangle_{p,q}^2 - \bb{E} \tilde{\triangle}_{p,q}^2] = \sum_{p=0}^q \bb{E} [ \triangle_{p,q}^2 I [m(p) - p \geq  \frac{3}{C_5} \log q ]] = o(q).
\end{align}


Now to show (\ref{eqn:2.60}) it suffices to prove the following,

\begin{equation}
\label{eqn:2.61}
\frac{1}{q} \sum_{p=0}^q [\tilde{\triangle}_{p,q}^2 - \bb{E} \tilde{\triangle}_{p,q}^2 ] \rightarrow 0 \text{ in probability.}
\end{equation}

We obtain (\ref{eqn:2.61}) by bounding the variance of the expression.  

Note that $\tilde{\triangle}_{p,q}$ and $\tilde{\triangle}_{r,q}$ are independent when 

$$
|p - r| \geq (\frac{3}{C_5}) \log q + 2
$$ holds. We also have the following uniform bound for $0 \leq p \leq q$,

\begin{align*}
Var(\tilde{\triangle}_{p,q}^2) &\leq \bb{E} [\tilde{\triangle}_{p,q}^4 ] \leq 4 \int_0^{\infty} x^3 \bb{P} ( |\triangle_{p,q} | \geq x ) dx \\
&\leq \int_0^{\infty} x^3 e^{-c_3 \sqrt{x}}dx  < \infty.
\end{align*}


This shows each of the variances of $\tilde{\triangle}_{p,q}^2$ are finite. We just need to count the total amount.

This yields the following, 



\begin{align*}
Var(\sum_{p=0}^q [\tilde{\triangle}_{p,q}^2 - \bb{E} \tilde{\triangle}_{p,q}^2]) \\
   &\leq 2 \sum_{p=0}^q \sum_{p \leq r \leq p+(3/C_5)\log q +2} [Var (\tilde{\triangle}_{p,q}^2) Var (\tilde{\triangle}_{r,q}^2) ]^{1/2}\\
   &= O( q \cdot \log q). 
\end{align*}

which shows (\ref{eqn:2.61}) holds. Therefore the lemma follows from McLeish's Theorem 2.3 [\cite{McLeish} 1974]. 
\end{proof}

\subsection{Main results}
Now, we can prove the main results.

We generalize the $n$ to be any real number, from our previous requirement for $n$ to be a power of $2$. Given 
\begin{equation}
\label{eqn:2.62}
2^{q-1} < n \leq 2^q,
\end{equation} define

\begin{equation}
\gamma_n = [\sum_{p=0}^q \bb{E} \triangle_{p,q}^2 ]^{1/2}.
\end{equation}

We must show that Equations (\ref{eqn:1.7}) and (\ref{eqn:1.8}) hold.

We define the half slab as follows,

\begin{definition}
The half slab $H_n$ is given by 
$$
\{ (x,y,z) \in \bb{S}_k, x \geq n, z \in \{0,\cdots,k\} \} 
$$
for some $n \geq 0$.
\end{definition}

\emph{Proof} of (\ref{eqn:1.7}), (\ref{eqn:1.8}).

From \ref{eqn:2.32} we see that $\gamma_n$ defined in above equation \ref{eqn:2.63} must satisfy (\ref{eqn:1.7}).

We note that $\mc{C}_{m(q)}$ surrounds the origin and it lies outside of $S(2^q)$, and therefore outside $S(n)$ given that $n$ is a dyadic scale or (\ref{eqn:2.62}) is satisfied. So $\mc{C}_{m(q)}$ must contain points in the half slab $H_n$ and therefore, 
\begin{equation}
\label{eqn:2.64}
0 \leq b_{0,n} \leq T(0, \mc{C}_{m(q)}),
\end{equation}

since the passage time from the origin to a point $v$ is the same for all $v \in \mc{C}_{m(q)}$.

If for some $k$ the following holds,

\begin{equation}
\label{eqn:2.65}
\mc{C}_{m(q-k)} \subset S(2^{q-1}) \subset S(n),
\end{equation}

any path from the origin to $H_n$ must intersect $\mc{C}_{m(q-k)}$ and the following holds,

\begin{equation}
b_{0,n} \geq T( \mathbf{0}, \mc{C}_{m(q-k)}) = T( \mathbf{0}, \mc{C}_{m(q)}) - T(\mc{C}_{m(q-k)}, \mc{C}_{m(q)}).
\end{equation}

When $m(q) \leq q + t$ then the following holds,
\begin{equation}
\label{eqn:2.67}
T( \mc{C}_{m(q-k)}, \mc{C}_{m(q)}) \leq T(\partial S(2^{q-k}), \partial S(2^{q+t+1})),
\end{equation}
as in (\ref{eqn:2.37}).

Equations (\ref{eqn:2.64})-(\ref{eqn:2.67}) show that when (\ref{eqn:2.62}) holds, then for all $x \geq 0$ and $k \leq q, t \geq 0$,

\begin{align}
\bb{P} [| b_{0,n} - T( \mathbf{0}, \mc{C}_{m(q)}) | \geq x]& \nonumber \\
        &\leq \bb{P} [ m(q-k) \geq q-1 ] +\bb{P} [ m(q) \geq q +t] + \bb{P} [ T(\partial S(2^{q-k}), \partial S(2^{q+t+1}) \geq x ] \nonumber \\
    & \leq e^{-C_5(k-1)} + e^{-C_5 t}+2 \cdot 2^{-C_{12} x / (k+t+1)}. \label{eqn:2.68}
\end{align}

Take $k=t= \lfloor \sqrt{x} \rfloor $ satisfying $ t \leq q$. Therefore, for $C_{21}< \infty, \; x \leq q^2$, the following holds,

\begin{equation}
\label{eqn:2.69}
\bb{P} [ |b_{0,n} - T( \mathbf{0}, \mc{C}_{m(q)}) | \geq x ] \leq C_{21} e^{- c_0 \sqrt{x}}.
\end{equation}

This holds even for $x \geq q^2$.

From (\ref{eqn:2.64}), by (\ref{eqn:2.28}) and (\ref{eqn:2.50}),

\begin{align*}
\bb{P} [ | b_{0,n} - T( \mathbf{0}, \mc{C}_{m(q)} ) | \geq x ] \leq& \bb{P} [ T( \mathbf{0}, \mc{C}_{m(q)}) \geq x ] \\
\leq& \bb{P} [ m(q) \geq q+t ] + \bb{P} [ T(\mathbf{0}, \partial S(2^{q+t})) \geq x ] \\
\leq& e^{-C_5 t} + 2 \cdot 2^{-C_{12} x / (q+t+1)}.
\end{align*}

From (\ref{eqn:2.69}), for $q$ chosen s.t. (\ref{eqn:2.62}) holds, and as $n \rightarrow \infty$,

\begin{equation}
\frac{b_{0,n} - \bb{E} T( \mathbf{0}, \mc{C}_{m(q)}) }{\gamma_n} \rightarrow 0 \text{ in probability.}
\end{equation}

Because the following holds,
$$
\gamma_n = \gamma_{2^q} = [ \sum_{p=0}^q \bb{E} \triangle_{p,q}^2 ]^{1/2},
$$
by (\ref{eqn:2.62}), and by Lemma \ref{lemm:4} (\ref{lem:4}), we may conclude the following,

$$
\frac{ b_{0,n} - \bb{E} T( \mathbf{0}, \mc{C}_{m(q)}) }{\gamma_n} \rightarrow N(0,1) \text{ in distribution. }
$$

Finally, to prove (\ref{eqn:1.8}), it is clear from (\ref{eqn:2.69}) that 
$$
\bb{E} b_{0,n} - \bb{E} T(\mathbf{0}, \mc{C}_{m(q)}) \text{ is bounded.}
$$

Let us define the following variables,

\begin{equation}
s_n = T( \mathbf{0}, \partial S(n)).
\end{equation}

Any path from $\mathbf{0} \rightarrow H_n$ must intersect $\partial S(n)$, so that 
\begin{equation}
s_n \leq b_{0,n}.
\end{equation}

We also have the following whenever (\ref{eqn:2.65}) holds,
\begin{equation}
\label{eqn:2.73}
T(\mathbf{0}, \partial S(2^{q-k})) \leq s_n.
\end{equation}

So with the same proof technique as in the last proof,

\begin{equation}
\label{eqn:2.74}
\bb{P} [ |s_n - T(\mathbf{0}, \mc{C}_{m(q)})| \geq x ] \leq C_{21} e^{-c_0 \sqrt{x} }
\end{equation}

and that 

\begin{equation}
\frac{s_n - \bb{E} s_n}{\gamma_n} \rightarrow N(0,1)
\end{equation}

in distribution.

From (\ref{eqn:2.74}) and (\ref{eqn:2.69}), 

\begin{equation}
\bb{E} S_n = \bb{E} b_{0,n} + O(1),
\end{equation}
and therefore we conclude (\ref{eqn:1.8}).

Now we prove the main result,

$$
\frac{T(\mathbf{0}, nu) - \bb{E} T(\mathbf{0},nu)}{\sqrt{2} \gamma_n} \rightarrow N(0,1)
$$
in distribution where $N(0,1)$ is a standard normal variable with mean $0$ and variance $1$.

\begin{equation}
\bb{E} b_{0,n} - \bb{E} s_n = O(1).
\end{equation}

Similarly, one can use (\ref{eqn:2.83}) below to show that
\begin{equation}
\bb{E}T(0, nu)- 2\bb{E}c_n = O(1)
\end{equation}
 for every unit vector u.

Let $u = (u_1, u_2)$ be a unit vector with a fixed position. Let $0 \leq u_2 \leq u_1 \leq 1$, without loss of generality. Therefore, $u_1 \geq 2^{-1/2}.$

Let the following hold for scale we denote by $r$,

\begin{equation}
2^{r-1} < \frac{1}{2} n u_1 \leq 2^r
\end{equation}

which combined with (\ref{eqn:2.62}) and the fact that $u_1 \geq 2^{-1/2}$ gives us that

\begin{equation}
\label{eqn:2.79}
q-3 \leq r \leq q.
\end{equation}

Consider the two squares denoted as

$$
S' = S(2^{r-1})
$$

and 

$$
S'' = nu + S(2^{r-1}).
$$ 

The squares $S', S''$ are disjoint and $\mathbf{0} \in S'$ and $nu \in S''$.

Therefore, a path from the origin to $nu$ must contain the piece from $\mathbf{0}$ to the first intersection with $\partial S'$ and the piece of its last intersection with $\partial S''$ to $nu$. 

So the following must hold,

\begin{equation}
\label{eqn:2.80}
T(\mathbf{0}, nu) \geq T(\mathbf{0}, \partial S') + T(nu, \partial S'').
\end{equation}

Now we wish to obtain an estimate in the other direction, and we consider the annuli $A(p), \cdots, A(p+1),\cdots$ such that $p \geq q+2$.

We have for each $p$,

\begin{equation}
\label{eqn:2.81}
S' \cup S'' \subset S(2^p)
\end{equation}

since $n \leq 2^q$ and $|u| =1$.

Recall the definition of $m(q+2),$ given by $\text{inf}[p \geq q+2: \exists \text{ dual blocking surface surrounding the origin in } A(p)]$.

Let $\mc{C} := \mc{C}_{m(q+2)}.$

By (\ref{eqn:2.81}), $\mc{C}$ must surround both $S', S''$ and therefore also must surround $\mathbf{0}$ and $nu$. Now, we connect $\mathbf{0}$ and $nu$ to $\mc{C},$ along an arc that lies on $\mc{C}$.

The following holds,

$$
\partial S (2^{m(q+2)+1}) \subset \text{int}(nu + S(2^{m(q+1)+2}))
$$
and leads to the following,

\begin{align*}
T( \mathbf{0}, nu) &\leq T(\mathbf{0}, \mc{C}) + T(nu, \mc{C}) \\
&\leq T(\mathbf{0}, \partial S(2^{m(q+2)+1})) + T(nu, nu + \partial S(2^{m(q+2)+2})). 
\end{align*}

By (\ref{eqn:2.80}) this gives,

\begin{align*}
\bb{P} [ | T(\mathbf{0},nu) - T(\mathbf{0}, \partial S') - T(nu, \partial S'') | \geq 2x] &\leq \bb{P} [ |  T(\mathbf{0}, \partial S') - T(\mathbf{0}, \partial S(2^{m(q+2)+1})) | \geq x ]  \\
+& \bb{P} [ | T(nu, \partial S '') - T(nu,nu+ \partial S ( 2^{m(q+2)+2})) | \geq x ] \\
=& \bb{P} [ |T(\mathbf{0}, \partial S(2^{r-1}) - T(\mathbf{0}, \partial S(2^{m(q+2)+1})) | \geq x ] \\
&+ \bb{P} [ | T(\mathbf{0}, \partial S(2^{r-1})) - T(\mathbf{0}, \partial S(2^{m(q+2)+2})) | \geq x] \\
&\leq 2 \bb{P} [m(q+2) \geq q+2+t] + 2 \bb{P} [m(q-k) \geq q-4] \\
 &+ \bb{P} [T(\partial S(2^{q-k}), \partial S(2^{q+3+t})) \geq x] \\
 &+ \bb{P} [T(\partial S(2^{q-k}), \partial S(2^{q+4+t} )) \geq x] \\
 &\leq 2e^{-C_5 t} + 2 e^{-C_5(k-4)} + 4 \cdot 2^{-C_{12} x /(k+t+4)}. 
\end{align*}

The above follows from (\ref{eqn:2.28}), (\ref{eqn:2.50}), (\ref{eqn:2.79}) and translation invariance.

Take $t = k = \lfloor \sqrt{x} \rfloor$, and this yields,

\begin{equation}
\label{eqn:2.83}
\bb{P} [ | T(\mathbf{0}, nu) - T(\mathbf{0}, \partial S') - T(nu, \partial S'') | \geq 2x ] \leq C_{22} e^{-c_0 \sqrt{x}}.
\end{equation}

Since $S'$ and $S''$ are disjoint we may recall that $T(\mathbf{0}, \partial S')$ and $T(nu, \partial S'')$ are independent, both with the distribution $s_{2^{r-1}} = T(\mathbf{0}, \partial S')$.

It therefore follows that,

\begin{equation}
\label{eqn:2.84}
\frac{1}{\sqrt{q}} [T(\mathbf{0}, nu) - T(\mathbf{0}, \partial S') - T(nu, \partial S'')] \rightarrow 0 \text{ in probability.}
\end{equation}

This lets us conclude (by (\ref{eqn:2.75})), 

\begin{equation}
\label{eqn:2.75}
\frac{T(\mathbf{0}, nu) - 2 \bb{E} s_{2^{r-1}}}{\sqrt{2} \gamma_{2^{r-1}} } \rightarrow N(0,1) \text{ in distribution.}
\end{equation}

Now we must show the following:
\begin{equation}
\label{eqn:2.86}
\frac{\gamma_{2^{r-1}}}{\gamma_n} \rightarrow 1,
\end{equation}

We must additionally show the following:

\begin{equation}
\label{eqn:2.87}
\bb{E} T(\mathbf{0}, nu) - 2 \bb{E} c_{2^{r-1}} = O(1).
\end{equation}

We state the following fact,

\begin{fact}
\label{fact:last}
For some $k$ fixed, if the following holds,
\begin{align*}
2^{q-k} < \tilde{n} \leq n \leq 2^q,
\end{align*}
then 
\begin{align*}
s_{2^{q-k}} \leq s_{\tilde{n}} \leq s_n \leq s_{2^q},
\end{align*}
\end{fact}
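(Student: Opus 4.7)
The plan is to reduce Fact \ref{fact:last} to the simple monotonicity statement that $s_m \leq s_n$ whenever $m \leq n$. Once that is established, the entire chain $s_{2^{q-k}} \leq s_{\tilde n} \leq s_n \leq s_{2^q}$ follows immediately from the hypothesis $2^{q-k} < \tilde n \leq n \leq 2^q$ by three applications of this inequality.

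First I would observe that whenever $m \leq n$, the box inclusion $S(m) = [-m,m]^2 \times [k] \subseteq [-n,n]^2 \times [k] = S(n)$ holds, and the origin lies in $\text{int}(S(m))$. Consequently, any path in $\mathbb{S}_k$ starting at $\mathbf{0}$ and ending on $\partial S(n)$ must cross $\partial S(m)$ at some vertex (the case $m = n$ being trivial).

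Next I would pick a geodesic $\pi$ realizing $s_n = T(\mathbf{0}, \partial S(n))$, and let $\pi_m$ denote the initial subpath of $\pi$ from $\mathbf{0}$ to its first intersection with $\partial S(m)$, which exists by the preceding observation. By the definition of set-to-set passage time,
\begin{equation*}
s_m = T(\mathbf{0}, \partial S(m)) \leq T(\pi_m).
\end{equation*}
Since each edge weight $t_e \in \{0,1\}$ is non-negative, dropping the remaining edges of $\pi$ only decreases the passage time, so $T(\pi_m) \leq T(\pi) = s_n$. Combining these gives $s_m \leq s_n$. This is precisely the subpath monotonicity principle recorded in Fact \ref{fact:1}, specialized to the situation where the terminal set is nested.

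Finally, applying the monotonicity three times with the pairs $(2^{q-k}, \tilde n)$, $(\tilde n, n)$, and $(n, 2^q)$ — each of which satisfies the hypothesis under the assumed chain $2^{q-k} < \tilde n \leq n \leq 2^q$ — yields Fact \ref{fact:last}. The statement is essentially a tautology given the nested structure of the boxes $S(\cdot)$ centered at the origin, so no real obstacle arises; the only item worth checking carefully is the existence of the first-crossing vertex of $\pi$ with $\partial S(m)$, which is automatic because $\pi$ is a finite self-avoiding path beginning in $\text{int}(S(m))$ and terminating outside it.
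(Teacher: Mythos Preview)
Your proposal is correct; the paper states Fact~\ref{fact:last} without proof, treating the monotonicity $m \leq n \Rightarrow s_m \leq s_n$ as self-evident, and your argument via subpath monotonicity (Fact~\ref{fact:1}) is exactly the natural justification the paper is implicitly invoking.
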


It is clear that (\ref{eqn:2.86}) follows from Fact \ref{fact:last}.

Therefore,

$$
\frac{1}{\sqrt{q}} [ c_{2^q} - c_{2^{q-k}}] \rightarrow_p 0 \text{ as } q \rightarrow \infty 
$$

from a similar argument as for (\ref{eqn:2.68}) and (\ref{eqn:2.69}).

Combined with (\ref{eqn:2.75}), for $n=2^q$ and $n=2^{q-k}$, this lets us conclude the error term satisfies,

$$
\frac{1}{\sqrt{q}} [ \bb{E} c_{2^q} - \bb{E} c_{2^{q-k}}] \rightarrow 0 \text{ and } \frac{ \gamma_n}{\gamma_{\tilde{n}}} \rightarrow 1.
$$

This is a special case of (\ref{eqn:2.86}) because of (\ref{eqn:2.79}), and (\ref{eqn:2.87}) follows immediately from (\ref{eqn:2.83}).

\section{Appendix}

\begin{proof} (Proof of (\ref{eqn:closededgesdualsurfaces}) )
It is clear that $\kappa \leq \rho$. The number of closed edges is less than number of closed dual circuits because each of the closed surfaces must give at least one closed edge. 
For the other direction, look at the open cluster that contains the closed dual circuit, and if it reaches the outer cluster then $\rho \leq \kappa$ is trivial since $\kappa = zero$ so it is possible to go from $2^j$ to $2^k$ with $0$ closed edges.
If this is not true then the open cluster must end at some endpoint with a fixed radius, and it is possible to find a closed dual surface which surrounds the closed edges. 
\end{proof}

\newpage

\bibliography{cltslabsThesis.bib}
\bibliographystyle{plain}

\end{document}